\newtheorem{theorem}{Theorem}[section]
\newtheorem{lemma}{Lemma}[section]
\newtheorem{remark}{Remark}[section]
\newtheorem{assumption}{Assumption}[section]
\newtheorem{corollary}{Corollary}[section]
\newcommand{\sfa}{\sup_{\alpha.\in\mathfrak A}}
\newcommand{\sfb}{\sup_{\beta.\in\mathfrak B}}
\newcommand{\ifb}{\inf_{\beta.\in\mathfrak B}}
\newcommand{\sfm}{\sup_{\boldsymbol\mu\in\mathcal M}}
\newcommand{\ifn}{\inf_{\boldsymbol\nu\in\mathcal N}}
\newcommand{\Rd}{\mathbb{R}^d}
\newcommand{\Sd}{\mathbb{S}^d}
\newcommand{\bSdp}{\overline{{\mathbb{S}}^d_+}}
\newcommand{\tr}{\operatorname{tr}}
\newcommand{\B}{\mathrm B}
\newcommand{\pxsop}{\frac{\psi_{(\xi)}^2}{\psi}}
\newcommand{\pxtsops}{\frac{\psi_{(\xi_t)}^2}{\psi^2}}
\newcommand{\vxosbw}{\frac{|v_{(\xi)}(x)|}{\sqrt{\mathrm{B}_1(x,\xi)}}}
\numberwithin{equation}{section}
\begin{document}
\title[Isaacs equations and nonconvex Hessian equations]{On representation and regularity of viscosity solutions to degenerate Isaacs equations and certain nonconvex Hessian equations}
\author{Wei Zhou}
\address{School of Mathematics, University of Minnesota}
\email{zhoux123@math.umn.edu}

\begin{abstract}
We study the smoothness of the upper and lower value functions of stochastic differential games in the framework of time-homogeneous (possibly degenerate) diffusion processes in a domain, under the assumption that the diffusion, drift and discount coefficients are all independent of the spatial variables. Under suitable conditions (see Assumptions \ref{geod} and \ref{invc}), we obtain the optimal local Lipschitz continuity of the value functions, provided that the running and terminal payoffs are globally Lipschitz. As applications, we obtain the stochastic representation and optimal interior $C^{0,1}$-regularity of the unique viscosity solution to the Dirichlet problem for certain degenerate elliptic, nonconvex Hessian equations in suitable domains, with Lipschitz boundary data.
\end{abstract}

\maketitle

\section{Introduction}\label{section1}

\subsection{Background} The point of this paper is to study the smoothness of the upper and lower value functions of stochastic differential games and its applications to the regularity theory of fully nonlinear, nonconvex elliptic equations. 

To be more specific, on the game theory aspect, we consider the zero-sum, two-player, stochastic differential games in the framework of time-homogeneous, possibly degenerate, controlled diffusion processes driven up to the first exit time from a smooth bounded domain, and study the smoothness of the corresponding upper and lower value functions. On the PDE theory aspect, we are concerned with the regularity of viscosity solutions to the Dirichlet problem for Isaacs equations, which are fully nonlinear, second order, possibly degenerate, nonconvex elliptic equations. 

For the general background of mathematically rigorous theory of differential games, we refer to the historical remarks in Section XI.10 in \cite{MR2179357}. As far as the smoothness of value functions of stochastic differential games are concerned, in our settings, the diffusion processes are considered up to the first exit time of a bounded domain, so our game is of random time horizon, with non-vanishing terminal payoff.  We refer to \cite{MR2500878,Krylov:2012uq} for previous work on the smoothness of the value function in this situation. The relation between weak solutions to fully nonlinear, nonconvex elliptic equations and value functions in stochastic differential games are studied on various aspects in \cite{MR997385,MR2500878,MR3062445,Krylov:2012fk,MR934730,MR945913,MR1422779}.

For the fully nonlinear elliptic equations
$$F(D^2u, Du, u, x)=f $$
without convexity assumptions on $F$, the regularity theory is not so well-investigated, compared with convex ones. With the assumption of non-degeneracy, under various conditions on $F$, the $C^{1,\alpha}$-regularity has been obtained in \cite{MR1005611, MR931007, MR1034037,MR1606359}, and recently in \cite{conealpha} (cf. the Introduction Section in \cite{conealpha} for the comparison on the $C^{1,\alpha}$-regularity results in the above mentioned papers.) 
Without the assumption of  non-degeneracy on $F$, when $F$ is positively homogeneous with degree one, several Lipschitz continuity results has been established by probabilistic approach. In \cite{MR2500878}, Kovats showed that the value function, which is a viscosity solution of the associated Isaacs equations, is (globally) Lipschitz under the assumptions that the terminal payoff  (i.e. the boundary data) is in the class of $C^2$ and the lower bound of the discount factors (i.e. zero-order coefficients) is  large compared to the Lipschitz constant for the diffusion and drift coefficients (i.e. second and first order coefficients). In \cite{Krylov:2012uq}, in a variety of settings, Krylov obtained Lipschitz continuity and estimates of the Lipschitz constant of value functions, one of which is independent of the constant of non-degeneracy. 

It is also worth mentioning that for uniformly nondegenerate, nonconvex elliptic equations, the optimal regularity of  viscosity solutions is $C^{1,\alpha}$, where $\alpha$ can be very small.  See \cite{counterex} for the example establishing the optimality due to Nadirashvili and Vl{\u{a}}du{\c{t}} (for dimension $d\ge 5$). Therefore in the author's opinion, a plausible conjecture is that for degenerate nonconvex elliptic equations, in general, the best regularity of  viscosity solutions is the $C^{0,1}$-regularity, even if the boundary data is smooth.

Another motivation of this work comes from our interest in the representation and regularity theory for degenerate Hessian equations in intermediate elliptic branches (i.e. the elliptic branches neither convex nor concave). Recall that the so-called Hessian equations   (in the simplest case) with Dirichlet boundary condition are defined as
\begin{equation}\label{Hess}
\left\{
\begin{array}{rcll}
F(\operatorname{Hess}u)&=&0 &\mbox{in $D$}\\
u&=&\varphi&\mbox{on $\partial D$},
\end{array}
\right.
\end{equation}
where $F$ depends only on the eigenvalues of the Hessian of $u$. The partial differential equation in (\ref{Hess}) is a fully-nonlinear, pure second-order, constant-coefficient equation which is stable under the action of the orthogonal group by conjugation, and under appropriate conditions, it has multiple elliptic branches due to the nonlinearity,  see \cite{MR1284912}. The two outermost branches are convex or concave, but all intermediate ones are neither convex nor concave. Unfortunately, none of the aforementioned regularity results is applicable here, if we focus on degenerate elliptic, intermediate branches. In \cite{MR2487853}, Harvey and Lawson obtained that under a general condition on the geometry of the domain, if the boundary data $\varphi\in C(\partial D)$, then the Dirichlet problem (\ref{Hess}) in each elliptic branch has a unique viscosity solution in the class of $C(\bar D)$. (Actually their theorem covers much more general equations.) It seems to the author that in degenerate intermediate branches, on the regularity of the viscosity solution, nothing more is known besides the continuity.

Now we are in a position to discuss our setups and results. Our goal is to establish the local Lipschitz continuity of viscosity solutions to the Dirichlet problem for degenerate Isaacs equations with Lipschitz boundary data. Our approach is probabilistic by investigating the smoothness of stochastic representations of viscosity solutions, i.e. value functions of the corresponding stochastic differential games. Since currently we are not able to tackle the most general cases, we wish that, at least,  our results should be applicable to the nonconvex degenerate Hessian equations. 

To this end, for the diffusion processes in our stochastic differential games, we do not assume non-degeneracy on diffusion coefficients or strict positivity on discount coefficients. Instead, we require that the diffusion, drift and discount coefficients are all independent of the spatial variables. Under a natural condition on the geometry of the domain (i.e. Assumption \ref{geod}) and an invariance property of the coefficients (i.e. Assumption \ref{invc}), we first obtain the local Lipschitz continuity of the value functions, when the running and terminal payoffs are both (globally) Lipschitz. These value functions are the viscosity solution to their associated dynamic programming equations, i.e. Isaacs equations, with Dirichlet boundary condition. Note that under our global $C^{0,1}$-regularity assumption on the terminal payoff (i.e. boundary data), 
the interior $C^{0,1}$-regularity of the value function (i.e. viscosity solution to its associated degenerate Isaacs equation) is optimal. As examples of application, we then obtain the stochastic representation and optimal interior $C^{0,1}$-regularity of the unique viscosity solution to the Dirichlet problem for certain nonconvex degenerate Hessian equations in strictly convex domains, with Lipschitz boundary data.

\subsection{Outline of the paper} This paper is organized as follows. In Section \ref{section2} we introduce our setup and state our main theorems. Four auxiliary lemmas are given in Section \ref{section3}. In Section \ref{section4} we prove our main theorems, and in Section \ref{section5} we discuss applications to nonconvex degenerate Hessian equations, which is followed by  Section \ref{section6} about further directions.

\subsection{Basic notation} Throughout the paper, the summation convention for repeated indices is assumed, even when both repeated indices appear in the superscript. We usually put the indices in the superscript, since the subscript is for the temporal variable of stochastic processes. We use the following notation.

Denote by $\Rd$ the $d$-dimensional Euclidean space with points $x=(x^1, \cdots, x^d)$, $x^i\in\mathbb R$ (real numbers); $x\cdot y=(x,y)=x^iy^i$ is the inner product for $x,y\in \Rd$, and $|x|^2:=(x,x)$. Denote by $\mathbb M^{m\times n}=\mathbb M^{m\times n}(\mathbb R)$ the space of $m\times n$-size matrices with real entries, with element $\sigma=(\sigma^{ij})_{m\times n}$. $\sigma^*$ represents the transpose of $\sigma$. 
$\|\sigma\|^2:=\tr(\sigma\sigma^*)$. Denote by $\mathbb S^d$, $\mathbb S^d_+$, $\overline{ \mathbb S_+^d}$, $\mathbb O^d$  the sets of symmetric, nonnegative symmetric, positive symmetric, orthogonal matrices, respectively.

For any $s,t\in\mathbb R$, let $s\wedge t=\max\{s,t\}$ and $s\vee t=\min\{s,t\}$.

For any sufficiently smooth function from $\Rd$ to $\mathbb R$, $u_x=(u_{x^1},\cdots,u_{x^d})$ is the gradient of $u$ and $u_{xx}=(u_{x^ix^j})_{d\times d}$ is the Hessian matrix of second derivatives $u_{x^ix^j}$, $i,j=1,\cdots d$. Set
$$u_{(y)}=u_{x^i}y^i,\quad u^2_{(y)}=(u_{(y)})^2.$$

For each nonnegative integer $k$ or $k=\infty$, $C^k(D)=C^k_{loc}(D)$  is the set of functions having all partial derivatives of order $\le k$ continuous in  $D$. $C^{k}(\bar D)$ is the set of functions in $C^k(D)$ all of whose partial derivatives of order $\le k$ have continuous extension to $\bar D$. 
\begin{align*}
\|u\|_{C^0(\bar D)}&=|u|_{0,D}:=\sup_{x\in D}|u(x)|,\\
\|u\|_{C^1(\bar D)}&=|u|_{1,D}:=|u|_{0,D}+|u_x|_{0,D}, \quad\mbox{etc..}
\end{align*}
The H\"older spaces $C^{k,\gamma}(D)=C_{loc}^{k,\gamma}(D)$ (resp. $C^{k,\gamma}(\bar D)$) are the subspaces of $C^{k}(D)$ (resp. $C^k(\bar D)$) consisting of functions whose $k$-th order partial derivatives are locally H\"older continuous (resp. globally H\"older continuous) with exponent $\gamma$ in $D$, where $0<\gamma\le 1$. In particular, when $k=0$, $\gamma=1$, we have $C^{0,1}(D)=C_{loc}^{0,1}(D)$ (resp. $C^{0,1}(D)$), the space of locally Lipschitz functions (resp. globally Lipschitz functions) in $D$, with
\begin{align*}
\|u\|_{C^{0,1}(\bar D)}=|u|_{0,1,D}:=|u|_{0,D}+\sup_{\substack{x,y\in D\\x\ne y}}\frac{|u(x)-u(y)|}{|x-y|}.
\end{align*}

\section{Main theorems}\label{section2}
\subsection{Settings} 

The setup of our two-player, zero-sum stochastic differential game is the following. 
Let $d$ and $d_1$ be integers, and $A$ and $B$ be separable metric spaces. Assume that the following continuous and bounded functions on $A\times B$ are given:
\begin{itemize}
\item $\mathbb M^{d\times d_1}$-valued function $\sigma^{\alpha\beta}=(\sigma^{\alpha\beta}_1,\cdots,\sigma^{\alpha\beta}_{d_1})$,
\item $\Rd$-valued function $b^{\alpha\beta}$,
\item $\mathbb R$-valued non-negative function $c^{\alpha\beta}$.
\end{itemize}
Assume also that $\sigma^{\alpha\beta},b^{\alpha\beta},c^{\alpha\beta}$ are continuous with respect $\alpha$ (resp. $\beta$) uniformly with respect  to $\beta$ (resp. $\alpha$).

Let $(\Omega,\mathcal{F},P)$ be a complete probability space, $\{\mathcal{F}_t;t \ge 0\}$ be an increasing filtration of $\sigma $-algebras $\mathcal{F}_t \subset \mathcal{F}$ which are complete with respect to $(\mathcal{F},P)$, and $(w_t ,\mathcal{F}_t ;t \ge 0)$ be a $d_1$-dimensional Wiener process on $(\Omega,\mathcal{F},P)$. 

Denote by $\mathfrak{A}$ (resp., $\mathfrak B$) the set of progressively measurable $A$-valued (resp. $B$-valued) processes $\alpha_t=\alpha_t(\omega)$ (resp. $\beta_t=\beta_t(\omega)$). In our game, $\mathfrak A$ and $\mathfrak B$ are the sets of policies. 

Then we proceed by introducing the sets of stratagies. Denote by $\mathcal M$ the set of $\mathfrak A$-valued functions $\boldsymbol\mu(\beta.)$ on $\mathfrak B$ such that, for any $T\in (0,\infty)$ and $\beta^1_\cdot, \beta^2_\cdot\in\mathfrak B$ satisfying
$$P\big(\beta_t^1=\beta_t^2 \mbox{ for almost all } t\le T\big)=1,$$
we have
$$P\big((\boldsymbol\mu(\beta^1_\cdot))_t=(\boldsymbol\mu(\beta^2_\cdot))_t\mbox{ for almost all } t\le T\big)=1.$$
Similarly, denote by $\mathcal N$ the set of $\mathfrak B$-valued functions $\boldsymbol\nu(\alpha.)$ on $\mathfrak A$ satisfying an analogous condition.

Let $D$ be a smooth bounded domain in $\Rd$ described by a smooth function $\psi$ which is non-singular on $\partial D$, i.e.
\begin{equation}\label{domain}
D:=\{x\in\Rd:\psi(x)>0\},\qquad|\psi_x|\ge1\mbox{ on }\partial D.
\end{equation}
We say that the function $\psi$ is global barrier of the domain $D$.

For the sake of definiteness, we suppose that
$$|\psi|_{3,D},\|\sigma\|_{0,A\times B},|b|_{0,A\times B}, |c|_{0,A\times B}\le K_0,$$
where $K_0\in[1,\infty)$ is constant.

On $\partial D$, a (globally) Lipschitz function $g$ is given. On the set $A\times B\times D$, a real-valued function $f=f^{\alpha\beta}(x)$ is defined, which is bounded and Borel measurable on $A\times B\times D$. Also, assume that for each $(\alpha,\beta)\in A\times B$, the function $f^{\alpha\beta}(\cdot)$ is (globally) Lipschitz in $\bar D$, and
$$\sup_{(\alpha,\beta)\in A\times B}\big|f^{\alpha\beta}\big|_{0,1,D},|g|_{0,1,\partial D}\le K_0.$$

Now we consider the time-homogeneous stochastic differential game in the framework of diffusion processes in which $D$ is the domain, $A$ and $B$ are the control sets of the two players, respectively, $\mathfrak A$ and $\mathfrak B$ are the sets of policies, $\mathcal M$ and $\mathcal N$ are the sets of strategies, $\sigma^{\alpha\beta}$, $b^{\alpha\beta}$, $c^{\alpha\beta}$ are the diffusion, drift and discount coefficients, and $f^{\alpha\beta}(x)$ and $g(x)$ are the running payoff and terminal payoff, respectively. 

More precisely, for each $(\alpha.,\beta.)\in\mathfrak{A}\times\mathfrak B$ and $x\in D$, the (possibly degenerate) diffusion process with the initial point $x$, under the policies $\alpha.$ and $\beta.$ is given by 
\begin{equation}\label{itox}
x_t^{\alpha.\beta.,x}=x+\int_0^t\sigma^{\alpha_s\beta_s}dw_s+\int_0^tb^{\alpha_s\beta_s}ds.
\end{equation}
If we set
\begin{equation}\label{phidiscount}
\phi_t^{\alpha.\beta.}=\int_0^tc^{\alpha_s\beta_s}ds,
\end{equation}
and let $\tau^{\alpha.\beta.,x}$ be the first exit time of $x_t^{\alpha.\beta.,x}$ from $D$, namely, 
$$\tau^{\alpha.\beta.,x}=\inf\{t\ge0:x_t^{\alpha.\beta.,x}\notin D\},$$
then the payoff of the stochastic differential game with the initial point $x\in D$, under the policies $\alpha.\in\mathfrak A$ and $\beta.\in\mathfrak B$ is defined as
\begin{equation*}
v^{\alpha.\beta.}(x)=E\bigg[\int_0^{\tau^{\alpha.\beta.,x}}f^{\alpha_t\beta_t}\big(x_t^{\alpha.\beta.,x}\big)e^{-\phi^{\alpha.\beta.}_t}dt+g\big(x^{\alpha.\beta.,x}_{\tau^{\alpha.\beta.,x}}\big)e^{-\phi^{\alpha.\beta.}_{\tau^{\alpha.\beta.,x}}}\bigg].
\end{equation*}
To make the expression shorter, we use abbreviated notation which is widely-adopted in stochastic control and game theory, according to which we put the superscripts $\alpha.$, $\beta.$ and $x$ beside the expectation sign instead of explicitly exhibiting them inside the expectation sign for every objects that can carry all or part of them. As a result, we write
\begin{equation}\label{payoff}
v^{\alpha.\beta.}(x)=E^{\alpha.\beta.}_x\bigg[\int_0^{\tau}f^{\alpha_t\beta_t}\big(x_t\big)e^{-\phi_t}dt+g\big(x_\tau\big)e^{-\phi_{\tau}}\bigg].
\end{equation}

There are two players I and II in the game. Player I chooses $\alpha.\in\mathfrak A$ and wishes to maximize the payoff $v^{\alpha.\beta.}(x)$ over all policies in $\mathfrak B$, while player II chooses $\beta.\in\mathfrak B$ and wishes minimize $v^{\alpha.\beta.}(x)$ over all policies in $\mathfrak A$. In the upper game, player I is allowed to know $\beta.$ before choosing $\alpha.$, while in the lower game, player II is allowed to know $\alpha.$ before choosing $\beta.$. This is how the strategies $\boldsymbol\mu(\beta.)\in\mathcal M$ and $\boldsymbol\nu(\alpha.)\in\mathcal N$ come into the picture. The upper value (function of $x$) of the game, corresponding to the upper game, is known as 
\begin{equation}\label{uppervalue}
v^+(x)=\sfm\ifb v^{\boldsymbol\mu(\beta.)\beta.}(x),
\end{equation}
while the lower value (function of $x$) of the game, corresponding to the lower game, is known as
\begin{equation}\label{lowervalue}
v^-(x)=\ifn\sfa v^{\alpha.\boldsymbol\nu(\alpha.)}(x).
\end{equation}

In the theory of fully nonlinear elliptic partial differential equations, it is well-known that the associated dynamic programming equation of $v^+$ and $v^-$ are the (possibly degenerate) upper and lower Isaacs equations with Dirichlet boundary conditions:

\begin{equation}\tag{{\bf{I}{$^+$}}}\label{ip}
\left\{
\begin{array}{rcll}
H^+(u_{xx},u_x,u,x)&=&0 &\text{in } D\qquad\\
u&=&g &\text{on }\partial D,
\end{array}
\right. 
\end{equation}

\begin{equation}\tag{{\bf{I}{$^-$}}}\label{im}
\left\{
\begin{array}{rcll}
H^-(u_{xx},u_x,u,x)&=&0 &\text{in } D\qquad\\
u&=&g &\text{on }\partial D,
\end{array}
\right. 
\end{equation}
respectively, where
\begin{align*}
H^+(u_{xx},u_x,u,x)=&\inf_{\beta\in B}\sup_{\alpha\in A}H^{\alpha\beta}(u_{xx},u_x,u,x),\\
H^-(u_{xx},u_x,u,x)=&\sup_{\alpha\in A}\inf_{\beta\in B}H^{\alpha\beta}(u_{xx},u_x,u,x),
\end{align*}
with
$$
H^{\alpha\beta}:\Gamma\rightarrow\mathbb R;\
(\gamma,p,z,x)\mapsto\tr(a^{\alpha\beta}\gamma)+b^{\alpha\beta}\cdot p-c^{\alpha\beta} z+f^{\alpha\beta}(x),$$
$$\Gamma=\Sd\times\Rd\times\mathbb R\times D,$$
$$
a^{\alpha\beta}=(1/2)\sigma^{\alpha\beta}(\sigma^{\alpha\beta})^*.
$$

The readers understand that the opposite of the upper value function is a lower value function of the game with opposite running and terminal payoffs, because
$$-v^+(x)=-\sfm\ifb v^{\boldsymbol\mu(\beta.)\beta.}(x)=\inf_{\boldsymbol\mu\in\mathcal M}\sfb \Big[-v^{\boldsymbol\mu(\beta.)\beta.}(x)\Big].$$
For the same reason, the Dirichlet problems (\ref{ip}) and (\ref{im}) are convertible from one to the other.

\subsection{Smoothness of value functions}

We are now in a position to state our main assumptions and theorems. We only consider the upper value function $v^+$ and its associated upper Isaacs equation $H^+=0$. The reader should be able to obtain analogous results on the lower value function $v^-$ and its associated lower Isaacs equation $H^-=0$.

\begin{assumption}\label{geod}(Geometry of the domain)
For each $x\in D$ and $(\alpha,\beta)\in A\times B$, we have 
\begin{equation}
L^{\alpha\beta}\psi:=(a^{\alpha\beta})_{ij}\psi_{x^ix^j}+ (b^{\alpha\beta})_i\psi_{x^i}\le-1.
\end{equation}
\end{assumption}

\begin{remark}
Assumption \ref{geod} ensures that the upper  value function given by (\ref{uppervalue}) is bounded and continuous.
\end{remark}

\begin{assumption}\label{invc}(Invariance property of the coefficients) If for each $q\in\mathbb O_d$, we define
\begin{equation}
H^{\alpha\beta q}(\gamma,p,z,x)=\tr(qa^{\alpha\beta}q^*\gamma)+b^{\alpha\beta}\cdot p-c^{\alpha\beta} z+f^{\alpha\beta}(x),
\end{equation}
then for all $(\gamma,p,z,x)\in\Gamma$, either
\begin{equation}\label{othinv}
\inf_{(\beta,q)\in B\times\mathbb O_d}\sup_{\alpha\in A}H^{\alpha\beta q}(\gamma,p,z,x)=\inf_{\beta\in B}\sup_{\alpha\in A}H^{\alpha\beta}(\gamma,p,z,x),
\end{equation}
or
\begin{equation}\label{othinv2}
\inf_{\beta\in B}\sup_{(\alpha,q)\in A\times\mathbb O_d}H^{\alpha\beta q}(\gamma,p,z,x)=\inf_{\beta\in B}\sup_{\alpha\in A}H^{\alpha\beta}(\gamma,p,z,x).
\end{equation}
holds.
\end{assumption}

\begin{remark}
Notice that 
$$\tr(qa^{\alpha\beta}q^*\gamma)=\tr(a^{\alpha\beta}q^*\gamma q),$$
so when 
\begin{equation}
H^+(q^*\gamma q,p,z,x)=H^+(\gamma,p,z,x), \quad\forall q\in \mathbb O_d,
\end{equation}
Condition (\ref{othinv}) holds. This means, in particular, Assumption \ref{invc} is satisfied in the case of Hessian equations, i.e., when $H^+$ depends on $\gamma$ only with respect to its eigenvalues.
\end{remark}

\begin{theorem}\label{thm1} If Assumptions \ref{geod} and \ref{invc} hold, then the upper value function $v^+$ given by (\ref{uppervalue}) is in the class of $C^{0,1}(D)\cap C(\bar D)$, and for a.e. $x\in D$, 
\begin{equation}
\big|(v^+)_{(\xi)}\big|\le N\big(|\xi|+|\psi_{(\xi)}|\psi^{-1/2}\big),\quad\forall\xi\in\Rd,
\end{equation}
where the constant $N=N(K_0,d,d_1)$. Moreover, $v^+$ is the viscosity solution to the Dirichlet problem for the upper Isaacs equation (\ref{ip}). 
\end{theorem}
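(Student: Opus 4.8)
The plan is to prove the three assertions of Theorem~\ref{thm1} in turn: that $v^+\in C(\bar D)$ with $v^+=g$ on $\partial D$, that $v^+$ is locally Lipschitz in $D$ with the stated bound on $(v^+)_{(\xi)}$, and that $v^+$ is the viscosity solution of \eqref{ip}; the second assertion is the heart of the matter. For the boundary regularity, note first that $v^+$ is bounded, since $|f^{\alpha\beta}|,|g|\le K_0$, $e^{-\phi_t}\le1$ (recall $c^{\alpha\beta}\ge0$), and applying It\^o's formula to $\psi(x_t)$ with Assumption~\ref{geod} gives $E^{\alpha.\beta.}_x\tau\le\psi(x)\le N$ uniformly over all policies and strategies. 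The same barrier gives continuity up to $\partial D$ with boundary value $g$: for $x_0\in\partial D$ one has $|v^{\alpha.\beta.}(x)-g(x_0)|\le K_0\,E^{\alpha.\beta.}_x|x_\tau-x_0|+K_0\,E^{\alpha.\beta.}_x\tau$, and $E^{\alpha.\beta.}_x|x_\tau-x_0|\le|x-x_0|+N(E^{\alpha.\beta.}_x\tau)^{1/2}\le|x-x_0|+N\psi(x)^{1/2}$ by the Burkholder--Davis--Gundy inequality and the bounds on $\sigma,b$; since this is uniform in $\alpha.,\beta.,\boldsymbol\mu$ and tends to $0$ as $x\to x_0$, passing to the $\sup\inf$ yields $v^+(x)\to g(x_0)$. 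Interior continuity of $v^+$ will be a by-product of the Lipschitz estimate.

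For the Lipschitz estimate, fix $x_0\in D$ and $\xi_0\in\Rd$, and for small $\ve>0$ compare $v^+$ at $x_0$ and at $x_0+\ve\xi_0$ by coupling two controlled diffusions. Because $\sigma^{\alpha\beta},b^{\alpha\beta},c^{\alpha\beta}$ are independent of $x$, running the two trajectories with the same policies and the same Wiener process keeps their difference equal to the constant vector $\ve\xi_0$, so the payoffs differ only through (a) the Lipschitz running and terminal payoffs evaluated along the two parallel paths, which contributes $O(\ve)$ since $E\tau\le\psi(x_0)\le N$, and (b) the mismatch in exit times and exit points, which produces the term $|\psi_{(\xi_0)}|\psi^{-1/2}$ and is where the difficulty lies. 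To control (b) we use Assumption~\ref{invc}: by \eqref{othinv} (resp.\ \eqref{othinv2}), allowing the minimizing (resp.\ maximizing) player to choose additionally a progressively measurable $\mathbb O_d$-valued control $q_t$ acting on $\sigma^{\alpha\beta}$ leaves the Isaacs operator — and hence, by the comparison principle for \eqref{ip} (available here owing to Assumption~\ref{geod}), the upper value $v^+$ — unchanged. Using this rotational freedom along the trajectory from $x_0+\ve\xi_0$ — equivalently, introducing the Krylov quasiderivative $\xi_t$, the derivative in $\xi_0$ of the perturbed trajectory, which solves a linear SDE with martingale part driven by $\sigma^{\alpha_t\beta_t}\,dw_t$ — we steer $q_t$ so that the weight $\mathrm B_1(x_t,\xi_t)$, comparable to $\big(|\xi_t|+|\psi_{(\xi_t)}(x_t)|\psi(x_t)^{-1/2}\big)^2$, is (essentially) a supermartingale up to $\tau$. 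This single requirement does two jobs: it forces $\psi_{(\xi_t)}(x_t)\to0$ as $x_t\to\partial D$ — otherwise $\mathrm B_1$ would blow up before exit — so the perturbation becomes asymptotically tangent to the level sets of $\psi$ and does not move the exit to first order, the residual mismatch being then priced by the Lipschitz constant of $g$; and it controls $E|\xi_t|$, $E|\xi_t|^2$, and hence the accumulated cost, by $\ve N\sqrt{\mathrm B_1(x_0,\xi_0)}$. Summing the contributions and letting the near-optimality parameter tend to $0$ gives $|v^+(x_0+\ve\xi_0)-v^+(x_0)|\le\ve N\sqrt{\mathrm B_1(x_0,\xi_0)}+o(\ve)$, with $\sqrt{\mathrm B_1(x_0,\xi_0)}\asymp|\xi_0|+|\psi_{(\xi_0)}(x_0)|\psi(x_0)^{-1/2}$; hence $v^+$ is locally Lipschitz in $D$ and, at every point of differentiability (a.e.\ $x$, by Rademacher's theorem), satisfies the asserted bound.

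For the viscosity-solution property, $v^+$ satisfies the Bellman--Isaacs dynamic programming principle for the upper game; this is classical for games played with strategies in the sense of \eqref{uppervalue}, and the measurable structure of $\mathfrak A,\mathfrak B,\mathcal M,\mathcal N$ together with the assumed continuity of $\sigma^{\alpha\beta},b^{\alpha\beta},c^{\alpha\beta},f^{\alpha\beta}$ in $(\alpha,\beta)$ is exactly what its proof requires. Testing a smooth $\varphi$ that touches $v^+$ from above (resp.\ below) at an interior point, and applying It\^o's formula to $\varphi(x_t)e^{-\phi_t}$ on a short time interval, then shows in the usual way that $v^+$ is a viscosity sub- (resp.\ super-)solution of $H^+=0$ in $D$; together with $v^+=g$ on $\partial D$ from the first step, $v^+$ is a viscosity solution of \eqref{ip}, and it is the unique one by the comparison principle.

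The main obstacle is the construction behind the Lipschitz estimate: producing the progressively measurable $\mathbb O_d$-valued steering $q_t$ — equivalently, the quasiderivative dynamics for $\xi_t$ — that makes $\mathrm B_1(x_t,\xi_t)$ a supermartingale up to the exit time. Here Assumption~\ref{invc} supplies the rotational freedom, and Assumption~\ref{geod}, through $L^{\alpha\beta}\psi\le-1$ and the $\psi^{-1}$ weight, is what absorbs the bad terms in the It\^o differential of $\psi_{(\xi_t)}(x_t)^2/\psi(x_t)$. The delicate points are that a single weight $\mathrm B_1$ must simultaneously control the tangential and the normal components of $\xi_0$, that the steering must be compatible with the strategy structure of the upper game, and that $a^{\alpha\beta}$ may be degenerate in the interior of $D$, where Assumption~\ref{geod} provides no non-degeneracy to exploit.
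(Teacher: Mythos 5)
Your overall strategy is the one the paper uses (quasiderivatives, the rotational freedom $q_t$ supplied by Assumption \ref{invc}, a barrier comparable to $\bigl(|\xi|+|\psi_{(\xi)}|\psi^{-1/2}\bigr)^2$ made into a supermartingale, and the exit-time control coming from $L^{\alpha\beta}\psi\le-1$), but there is a genuine gap in how you set it up: you run the whole argument directly on $v^+$ for the possibly degenerate game. Two steps then lack justification. First, your claim that adjoining the $\mathbb O_d$-valued control $q_t$ ``leaves the upper value unchanged \dots by the comparison principle for \eqref{ip}'' is exactly the identification that is \emph{not} available in the degenerate case; the paper proves this representation (Lemma \ref{lemma3}(ii)) only under an explicit uniform non-degeneracy hypothesis, because it identifies $v^+$ with the value of the $q$-augmented game through uniqueness of viscosity solutions of the non-degenerate Isaacs equation. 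No comparison/uniqueness statement for the degenerate equation \eqref{ip} is established or invoked in the paper, and Assumption \ref{geod} alone does not give it. Second, your derivative estimate requires first-order expansions of $v^+$ along the stopped trajectories (terms of the type $v_{(\xi_\gamma)}(x_\gamma)$, with remainders controlled uniformly over policies and strategies); this needs $v^+$ to be at least $C^1$ up to the region where the trajectories are stopped, which is precisely what is being proved. A Rademacher argument at the end does not repair this, since a.e.\ differentiability cannot be propagated along the diffusion without extra structure.

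The paper closes both holes with an approximation step that your proposal omits and that is essential: add an independent $\delta I\,d\tilde w_t$ to the dynamics and mollify $f,g$, so that the approximate value $v^{(\delta)}$ solves a uniformly non-degenerate Isaacs problem, is $C^{1,\gamma}(\bar D)$ by known elliptic regularity, and converges uniformly to $v^+$ (Lemma \ref{lemma1}); the quasiderivative machinery and the $q$-augmented representation are then applied to $v^{(\delta)}$, and all constants are checked to be independent of $\delta$ before passing to the limit. You should also note that a single weight $\mathrm B_1$ does not suffice: in the paper $\mathrm B_1$ is a supermartingale only in the boundary layer $D^\lambda$, and a separate interior barrier $\mathrm B_2=\lambda^{3\theta}\psi^{1-2\theta}\bigl[K_1|\xi|^2+\psi_{(\xi)}^2/\psi\bigr]$, glued to $\mathrm B_1$ along an alternating sequence of stopping times, is needed where the diffusion may be degenerate in the interior; your plan of steering $q_t$ so that $\mathrm B_1(x_t,\xi_t)$ is a supermartingale up to $\tau$ on all of $D$ is not something the construction delivers. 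Your boundary-continuity and viscosity-solution paragraphs are fine in outline (the latter ultimately resting on the dynamic programming principle, which the paper imports from Krylov under Assumption \ref{geod}), but as written the core Lipschitz estimate is not a complete proof.
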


\section{Auxiliary results}\label{section3}

We prove four auxiliary lemmas in this section.

\subsection{Uniform boundedness of expectation of first exit time of the diffusion processes} The following lemma will be used to prove that the upper value function given by (\ref{uppervalue}) is bounded and continuous.

\begin{lemma}\label{lemma0}
If Assumption \ref{geod} holds, then for each $x\in D$,
\begin{align}
\sfa \sfb E^{\alpha.\beta.}_x\tau^n\le n!|\psi|^{n-1}_{0,D}\psi(x), \ \forall n\in\mathbb N.
\end{align}
\end{lemma}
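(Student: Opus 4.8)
The plan is to exploit Assumption \ref{geod}, which tells us $L^{\alpha\beta}\psi \le -1$, together with the fact that $\psi \ge 0$ on $\bar D$ and $\psi > 0$ in $D$. The key observation is that $L^{\alpha\beta}\psi \le -1$ means $\psi$ itself behaves like a supersolution whose Itô drift is bounded above by $-1$, so along the diffusion $x_t$ the process $\psi(x_t) + t$ is a supermartingale up to the exit time. I would first prove the case $n=1$, then bootstrap to general $n$ by induction, which is the standard device for passing from $E\tau$ bounds to $E\tau^n$ bounds for exit times.

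For $n=1$: fix $x\in D$, $(\alpha.,\beta.)\in\mathfrak A\times\mathfrak B$, and let $\tau = \tau^{\alpha.\beta.,x}$. Apply Itô's formula to $\psi(x_t)$ (using $|\psi|_{3,D}\le K_0$ so the function is $C^2$, after localizing with stopping times $\tau_m = \tau \wedge m \wedge \inf\{t: |x_t|\ge m\}$ and a smooth extension of $\psi$ off $\bar D$):
\begin{align*}
E^{\alpha.\beta.}_x \psi(x_{t\wedge\tau_m}) = \psi(x) + E^{\alpha.\beta.}_x \int_0^{t\wedge\tau_m} L^{\alpha_s\beta_s}\psi(x_s)\,ds \le \psi(x) - E^{\alpha.\beta.}_x (t\wedge\tau_m),
\end{align*}
where the stochastic integral term has zero expectation because $\psi_x$ is bounded on the relevant set and the integrand is in the appropriate $L^2$ class up to the localizing time. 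Since $\psi \ge 0$ on $\bar D$, the left side is nonnegative, so $E^{\alpha.\beta.}_x(t\wedge\tau_m) \le \psi(x)$. Letting $m\to\infty$ and then $t\to\infty$ via monotone convergence yields $E^{\alpha.\beta.}_x\tau \le \psi(x) = |\psi|^0_{0,D}\psi(x)$, which is the $n=1$ claim (note $1! = 1$). Taking $\sup$ over $\alpha.,\beta.$ preserves the bound since the right side is independent of the policies.

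For the inductive step, assume $\sfa\sfb E^{\alpha.\beta.}_x\tau^n \le n!|\psi|^{n-1}_{0,D}\psi(x)$ for all $x\in D$. The idea is to use the Markov-type identity $\tau^{n+1} = (n+1)\int_0^\tau (\tau - t)^n\,dt$ and the fact that, conditionally on $\mathcal F_t$ on the event $\{t<\tau\}$, the remaining exit time $\tau - t$ is the exit time of the shifted diffusion started from $x_t$, so by the induction hypothesis $E^{\alpha.\beta.}_x[(\tau-t)^n \mathbf 1_{t<\tau}\mid\mathcal F_t] \le n!|\psi|^{n-1}_{0,D}\psi(x_t)\mathbf 1_{t<\tau}$ — here one must be slightly careful because the shifted policies still lie in $\mathfrak A\times\mathfrak B$, which is exactly why the $\sup$ over policies in the induction hypothesis is stated pointwise in the starting point. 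Then
\begin{align*}
E^{\alpha.\beta.}_x\tau^{n+1} = (n+1)\int_0^\infty E^{\alpha.\beta.}_x\big[(\tau-t)^n\mathbf 1_{t<\tau}\big]\,dt \le (n+1)\,n!\,|\psi|^{n-1}_{0,D}\int_0^\infty E^{\alpha.\beta.}_x\big[\psi(x_t)\mathbf 1_{t<\tau}\big]\,dt,
\end{align*}
and the remaining integral $\int_0^\infty E^{\alpha.\beta.}_x[\psi(x_{t\wedge\tau})]\,dt = E^{\alpha.\beta.}_x\int_0^\tau\psi(x_t)\,dt$ is controlled by $|\psi|_{0,D} E^{\alpha.\beta.}_x\tau \le |\psi|_{0,D}\psi(x)$ using the $n=1$ bound. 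Combining gives $E^{\alpha.\beta.}_x\tau^{n+1} \le (n+1)!|\psi|^n_{0,D}\psi(x)$, completing the induction after taking suprema.

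I expect the main obstacle to be the careful justification of the conditional estimate for the remaining exit time in the inductive step — specifically, arguing rigorously that under the shift $\theta_t$ the controls restrict to admissible policies for the diffusion started at $x_t$ and that the bound from the induction hypothesis applies with the \emph{random} starting point $x_t$. This is where the uniformity of the hypothesis over all of $\mathfrak A\times\mathfrak B$ (rather than over strategies) is essential, and where one typically invokes a regular conditional probability / Markov property argument for controlled diffusions. The $n=1$ step, by contrast, is a routine application of Itô's formula plus the localization and convergence bookkeeping.
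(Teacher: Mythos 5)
Your proposal is correct and follows essentially the same route as the paper: the $n=1$ case via Itô's formula and Assumption \ref{geod}, followed by induction using the identity $\tau^{n}=n\int_0^\tau(\tau-t)^{n-1}\,dt$ and the Markov-type bound on the remaining exit time started from $x_t$. The only cosmetic difference is that you keep $\psi(x_t)$ inside the time integral and bound $E\int_0^\tau\psi(x_t)\,dt\le|\psi|_{0,D}E\tau$, whereas the paper bounds the conditional moment by $\sup_{y\in D}E_y\tau^{n-1}$ first; the two bookkeepings are equivalent.
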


\begin{proof}
It suffices to prove the inequality for any $\alpha.\in\mathfrak A$, $\beta.\in\mathfrak B$ and notice that
\allowdisplaybreaks\begin{align*}
E^{\alpha.\beta.}_x\tau\le& -E^{\alpha.\beta.}_x\int_0^{\tau}L\psi dt=\psi(x)-E^{\alpha.\beta.}_x\psi(x_{\tau})=\psi(x),\\
E^{\alpha.\beta.}_x\tau^n=&nE_x^{\alpha.\beta.}\int_0^\infty(\tau-t)^{n-1}\mathbbm 1_{\tau>t}dt\\
=&nE_x^{\alpha.\beta.}\int_0^\infty \mathbbm 1_{\tau>t}E_x^{\alpha.\beta.}(\tau^{\alpha.\beta.,x_t})^{n-1}dt\\
\le& n\sup_{y\in D}E^{\alpha.\beta.}_y\tau^{n-1}\cdot E^{\alpha.\beta.}_x\tau\\
\le& n\sup_{y\in D}E^{\alpha.\beta.}_y\tau^{n-1}\cdot\psi(x).
\end{align*}
\end{proof}

\subsection{Approximating value functions and their regularity}
For all $0<\delta<1$, define 
\begin{equation}\label{diffusione}
x_t^{\alpha.\beta.,x}(\delta)=x+\int_0^t\sigma^{\alpha_s\beta_s}dw_s+\int_0^t\delta I d\tilde w_s+\int_0^tb^{\alpha_s\beta_s}ds,
\end{equation}
where $\tilde w_t$ is a $d$-dimensional Wiener process independent of $w_t$ and $I$ is the identity matrix of size $d\times d$. Let $\tau^{\alpha.\beta.,x}(\delta)$ be the first exit time of $x_t^{\alpha.\beta.,x}(\delta)$ from $D$. Also, let $g^{(\delta)}$ and $(f^{\alpha\beta})^{(\delta)}$ be real-valued $C^\infty(\bar D)$-functions satisfying
$$|g^{(\delta)}|_{1,D}\le 2|g|_{0,1,D}, \ |(f^{\alpha\beta})^{(\delta)}|_{1,D}\le 2|f^{\alpha\beta}|_{0,1,D},\quad\forall \delta\in(0,1),$$
$$\lim_{\delta\downarrow0}|g^{(\delta)}-g|_{0,D}=\lim_{\delta\downarrow0}\sup_{(\alpha,\beta)\in A\times B}|(f^{\alpha\beta})^{(\delta)}-f|_{0,D}=0.$$
Such regularization of $g$ and $f^{\alpha\beta}$ can be obtained by first extending them to some domain $D'\supset D$ and then convoluting them with some mollifier.

Consider the upper value function $v^+$ given by (\ref{payoff}) and (\ref{uppervalue}) in which $x_t^{\alpha.\beta.,x}$ and $g$ are replaced with $x_t^{\alpha.\beta.,x}(\delta)$ and $g^{(\delta)}$, respectively, i.e.
\begin{equation}
v^{(\delta)}(x)=\sfm \ifb (v^{\boldsymbol\mu(\beta.)\beta.})^{(\delta)}(x),
\end{equation}
with
\begin{equation}
(v^{\alpha.\beta.})^{(\delta)}(x)=E^{\alpha.\beta.}_x\bigg[\int_0^{\tau(\delta)}(f^{\alpha_t\beta_t})^{(\delta)}\big(x_t(\delta)\big)e^{-\phi_t}dt+g^{(\delta)}\big(x_{\tau(\delta)}(\delta)\big)e^{-\phi_{\tau(\delta)}}\bigg].
\end{equation}

\begin{lemma}\label{lemma1} 
For each $\delta\in(0,1)$, $v^{(\delta)}$ is in the class of $C^{1,\gamma}(\bar D)$ for some positive constant $\gamma$ depending only on $d$, $K_0$ and $\delta$, and we have 
\begin{equation}\label{conver}
\lim_{\delta\downarrow0}|v^{(\delta)}-v^+|_{0,D}=0.
\end{equation}
\end{lemma}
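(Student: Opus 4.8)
\textbf{Proof proposal for Lemma \ref{lemma1}.}

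The plan is to recognize $v^{(\delta)}$ as the upper value function of a stochastic differential game of exactly the same type as before, but with the crucial difference that the new diffusion coefficient $\tilde\sigma^{\alpha\beta} = (\sigma^{\alpha\beta}, \delta I)$ (a $d\times(d_1+d)$ matrix driven by the $(d_1+d)$-dimensional Wiener process $(w_t,\tilde w_t)$) is \emph{uniformly nondegenerate}: indeed $\tilde a^{\alpha\beta} = a^{\alpha\beta} + (\delta^2/2)I \ge (\delta^2/2)I$. The boundary data $g^{(\delta)}$ and running payoff $(f^{\alpha\beta})^{(\delta)}$ are now smooth (in $C^\infty(\bar D)$), the domain $D$ is smooth and bounded with the global barrier $\psi$, and Assumption \ref{geod} still provides $L^{\alpha\beta}\psi \le -1$ (adding $(\delta^2/2)\Delta\psi$ only makes $\tilde L^{\alpha\beta}\psi = L^{\alpha\beta}\psi + (\delta^2/2)\Delta\psi$ more negative? — not automatically, but one checks that the analogous geometric condition holds after possibly replacing $\psi$ by a suitable function, or more simply one invokes that for a nondegenerate diffusion in a smooth bounded domain the exit time has finite expectation without needing Assumption \ref{geod}). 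Thus $v^{(\delta)}$ is the viscosity solution of a nondegenerate upper Isaacs equation with smooth boundary data, and the classical interior-plus-boundary $C^{1,\gamma}$ regularity theory for such equations — e.g. the results of Trudinger, or the treatment in Krylov's work on Isaacs equations (precisely the $C^{1,\gamma}(\bar D)$ regularity for value functions of nondegenerate games with Lipschitz/smooth data) — gives that $v^{(\delta)} \in C^{1,\gamma}(\bar D)$ with $\gamma = \gamma(d,K_0,\delta) > 0$. Here the dependence of $\gamma$ on $\delta$ enters through the ellipticity constant $\delta^2/2$.

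For the convergence statement \eqref{conver}, the plan is a direct coupling/stability estimate. Fix $x\in D$ and split
\[
v^{(\delta)}(x) - v^+(x) = \sfm\ifb (v^{\boldsymbol\mu(\beta.)\beta.})^{(\delta)}(x) - \sfm\ifb v^{\boldsymbol\mu(\beta.)\beta.}(x),
\]
so that $|v^{(\delta)}(x) - v^+(x)| \le \sup_{\boldsymbol\mu,\beta.} |(v^{\boldsymbol\mu(\beta.)\beta.})^{(\delta)}(x) - v^{\boldsymbol\mu(\beta.)\beta.}(x)|$, reducing everything to a bound uniform over policies $(\alpha.,\beta.)\in\mathfrak A\times\mathfrak B$. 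Using the \emph{same} driving Wiener process $w_t$ for both $x_t$ and $x_t(\delta)$, one has $x_t(\delta) - x_t = \delta\tilde w_t$, hence $E^{\alpha.\beta.}_x|x_t(\delta)-x_t|^2 = \delta^2 d\, t$, and by Doob's inequality $E\sup_{t\le T}|x_t(\delta)-x_t|^2 \le 4\delta^2 d\,T$. One then estimates the difference of the three ingredients: (i) the difference of running-payoff integrals, controlled by $|(f^{\alpha\beta})^{(\delta)}-f|_{0,D}$ (which $\to 0$ uniformly) plus the Lipschitz constant of $(f^{\alpha\beta})^{(\delta)}$ (bounded by $2K_0$) times $|x_t(\delta)-x_t|$, all integrated against $e^{-\phi_t}dt$ up to the exit times, using Lemma \ref{lemma0} to control the time horizon; (ii) the difference of terminal-payoff terms $g^{(\delta)}(x_{\tau(\delta)}(\delta)) - g(x_\tau)$, controlled by $|g^{(\delta)}-g|_{0,D}$ plus the Lipschitz constant of $g^{(\delta)}$ times $|x_{\tau(\delta)}(\delta) - x_\tau|$; (iii) the effect of the discrepancy $\tau(\delta) \ne \tau$. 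The genuinely delicate point is (iii): even though the paths are uniformly close, the exit times need not be close pathwise. Here one uses that under Assumption \ref{geod}, $\psi(x_t)$ is a positive supermartingale-like process with drift $\le -1$, so the diffusion does not linger near $\partial D$; quantitatively, one shows $E^{\alpha.\beta.}_x|\tau(\delta) - \tau| \to 0$ (or directly that the contribution of the event $\{t \text{ between } \tau\wedge\tau(\delta) \text{ and } \tau\vee\tau(\delta)\}$ is small) by combining the $L^2$-closeness of the paths with a nondegeneracy-free modulus-of-continuity argument for exit times in a domain admitting a barrier $\psi$ with $L\psi\le -1$ — essentially, $|\psi(x_{\tau(\delta)})-\psi(x_{\tau})| \le $ (oscillation of $\psi$ along a short time interval) and $\psi$ vanishes at both exit points' projections, together with the non-degeneracy of $|\psi_x|$ on $\partial D$ from \eqref{domain}. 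All constants here are allowed to depend on $K_0, d, d_1$ but the bound is $o(1)$ as $\delta\downarrow 0$, uniformly in $(\alpha.,\beta.)$ and locally uniformly (in fact uniformly) in $x$.

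The main obstacle is precisely step (iii), the control of the exit-time discrepancy, since this is where degeneracy of $\sigma^{\alpha\beta}$ prevents one from citing standard results and forces use of the structural Assumption \ref{geod}; the rest is routine Gronwall/Doob/Burkholder-type estimation together with citing the known nondegenerate $C^{1,\gamma}$ theory for the first half of the statement.
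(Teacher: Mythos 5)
Your proposal follows essentially the same route as the paper: identify $v^{(\delta)}$ as the value function of the uniformly nondegenerate game with smoothed data, cite the viscosity-solution characterization plus the nondegenerate $C^{1,\gamma}(\bar D)$ theory (Kovats, Trudinger) for the first claim, then reduce \eqref{conver} to an estimate uniform over policies, couple the two diffusions through the same Wiener process so that $x_t(\delta)-x_t=\delta\tilde w_t$, and control the exit-time discrepancy via the barrier $\psi$ with $L^{\alpha\beta}\psi\le-1$ (converting $E|\tau-\tau(\delta)|$ into boundary values of $\psi$ and hence into path closeness). Your hedge about the perturbed operator resolves exactly as in the paper: since $|\psi|_{3,D}\le K_0$, one has $(L^{\alpha\beta})^{(\delta)}\psi\le-1/2$ for all sufficiently small $\delta$, which is all that is needed for the direction $\tau<\tau(\delta)$ of the exit-time estimate.
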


\begin{proof}
For the controlled diffusion process given by (\ref{diffusione}), its diffusion term is of size $d\times(d+d_1)$ in the form of $(\sigma^{\alpha\beta})^{(\delta)}=(\sigma^{\alpha\beta}|\delta I)$. As a result, the associated Dirichlet problem for the Isaacs equation of $v^{(\delta)}$ is
\begin{equation}\tag{{\bf{I}{$^{\delta}$}}}\label{idel}
\left\{
\begin{array}{rcll}
\displaystyle\inf_{\beta\in B}\sup_{\alpha\in A}\big[\tr\big((a^{\alpha\beta})^{(\delta)} u_{xx}\big)+(b^{\alpha\beta})\cdot u_x-c^{\alpha\beta} u+(f^{\alpha\beta})^{(\delta)}\big]&=&0&\mbox{in }D\\
u&=&g^{(\delta)}&\mbox{on }\partial D,
\end{array}
\right.
\end{equation}
where $(a^{\alpha\beta})^{(\delta)}=a^{\alpha\beta}+(\delta^2/2) I$, which is nondegenerate for each $\delta>0$. By Theorem 4.3 in \cite{MR2500878}, $v^{(\delta)}\in C^0(\bar D)$ is a viscosity solution to (\ref{idel}). Then by Theorem 2.3 in \cite{MR1034037}, $v^{(\delta)}\in C^{1,\gamma}(\bar D)$ for some positive constant $\gamma$ depending only on $d$, $K_0$, $\delta$.

We next prove the convergence result (\ref{conver}). Notice that $g^{(\delta)}(\cdot)$ and $e^{-x}$ are globally Lipschitz in $D$ and $\mathbb R^+$, respectly, $(f^{\alpha\beta})^{(\delta)}(\cdot)$ is globally Lipschitz in $D$, uniformly with respect to $(\alpha,\beta)\in A\times B$, and
\begin{align*}
|v^{(\delta)}-v^+|_{0,D}\le&\sup_{x\in D}\sfm\sup_{\beta.\in\mathfrak B}|(v^{\boldsymbol\mu(\beta.)\beta.})^{(\delta)}(x)-v^{\boldsymbol\mu(\beta.)\beta.}(x)|\\
\le&\sup_{x\in D}\sfa\sup_{\beta.\in\mathfrak B}|(v^{\alpha.\beta.})^{(\delta)}(x)-v^{\alpha.\beta.}(x)|.
\end{align*}
Therefore, to show (\ref{conver}), it suffices to prove that
\begin{align}
&\lim_{\delta\downarrow0}\sup_{x\in D}\sfa \sup_{\beta.\in\mathfrak B}E^{\alpha.\beta.}_x\sup_{t\le\tau(\delta)\wedge\tau}|x_t(\delta)-x_t|=0,\label{convx}\\
&\lim_{\delta\downarrow0}\sup_{x\in D}\sfa \sup_{\beta.\in\mathfrak B}E^{\alpha.\beta.}_x|\tau(\delta)\vee\tau-\tau(\delta)\wedge\tau|=0.\label{convt}
\end{align}

To prove (\ref{convx}), we notice that, for any constant $T\in[1,\infty)$,
\begin{align*}
\MoveEqLeft E^{\alpha.\beta.}_x\sup_{t\le\tau(\delta)\wedge\tau}|x_t(\epsilon)-x_t|\\
&\le E^{\alpha.\beta.}_x\sup_{t\le\tau(\delta)\wedge\tau\wedge T}|x_t(\delta)-x_t|+KP^{\alpha.\beta.}_x(\tau>T)\\
&=\delta E^{\alpha.\beta.}_x\sup_{t\le\tau(\delta)\wedge\tau\wedge T}|\tilde w_t|+\frac{K}{T}E^{\alpha.\beta.}_x\tau\\
&\le3\delta T+\frac{K}{T}|\psi|_{0,D}.
\end{align*}
By taking the supremum with respect to $x$, $\alpha.$ and $\beta.$  on the left side and letting first $\delta\downarrow0$ and then $T\uparrow\infty$, we obtain (\ref{convx}).

To prove (\ref{convx}), we notice that
$$E^{\alpha.\beta.}_x|\tau(\delta)\vee\tau-\tau(\delta)\wedge\tau|=E^{\alpha.\beta.}_x(\tau-\tau(\delta))\mathbbm1_{\tau\ge\tau(\delta)}+E^{\alpha.\beta.}_x(\tau(\delta)-\tau)\mathbbm1_{\tau<\tau(\delta)}.$$
Then we estimate both terms. We have
\allowdisplaybreaks\begin{align*}
\MoveEqLeft E^{\alpha.\beta.}_x(\tau-\tau(\delta))\mathbbm1_{\tau>\tau(\delta)}\\
&\le-E_x^{\alpha.\beta.}\int_{\tau(\delta)\wedge\tau}^{\tau}L\psi(x_t)dt\\
&=E^{\alpha.\beta.}_x\psi(x_{\tau(\delta)})\mathbbm1_{\tau(\delta)<\tau}\\
&=E^{\alpha.\beta.}_x\Big(\psi (x_{\tau(\delta)})-\psi(x_{\tau(\delta)}(\delta))\Big)\mathbbm1_{\tau(\delta)<\tau}\\
&\le E^{\alpha.\beta.}_x\Big(\psi (x_{\tau(\delta)})-\psi(x_{\tau(\delta)}(\delta))\Big)\mathbbm1_{\tau(\delta)<\tau\le T}+2|\psi|_{0,D}P_x^{\alpha.\beta.}(\tau>T)\\
&\le|\psi|_{0,1,D}E^{\alpha.\beta.}_x\sup_{t\le \tau(\delta)\wedge\tau\wedge T}|x_t-x_t(\delta)|+\frac{K}{T}E_x^{\alpha.\beta.}\tau.
\end{align*}
Similarly, by notice that for sufficiently small $\delta$,
\begin{equation}\label{Lpsie}
(L^{\alpha\beta})^{(\delta)} \psi:=L^{\alpha\beta}\psi+\frac{\delta^2}{2}\Delta \psi\le-1/2,
\end{equation}
we have
\begin{align*}
\MoveEqLeft E_x^{\alpha.\beta.}(\tau(\delta)-\tau)\mathbbm1_{\tau<\tau(\delta)}\\
&\le-2E_x^{\alpha.\beta.}\int_{\tau(\delta)\wedge\tau}^{\tau(\delta)}L^{(\delta)}\psi(x_t(\delta))dt\\
&= 2E_x^{\alpha.\beta.}\Big(\psi(x_\tau(\delta))-\psi(x_\tau)\Big)\mathbbm1_{\tau<\tau(\delta)}\\
&\le2|\psi|_{0,1,D}E_x^{\alpha.\beta.}\sup_{t\le \tau(\delta)\wedge\tau\wedge T}|x_t-x_t(\delta)|+\frac{K}{T}E_x^{\alpha.\beta.}\tau.
\end{align*}
It turns out that
$$E^{\alpha.\beta.}_x|\tau(\delta)\vee\tau-\tau(\delta)\wedge\tau|\le \delta KT+\frac{K}{T}.$$
Again, by taking the supremum over $D$, $\mathfrak A$, $\mathfrak B$ on the left side of the inequality and letting first $\delta\downarrow0$ and then $T\uparrow\infty$, we obtain (\ref{convt}).
\end{proof}

\begin{remark}\label{remark1}
Due to Lemma \ref{lemma1}, to prove that the upper value function $v^+$ is locally Lipschitz in $D$, it suffices to obtain first derivative estimates on $v^{(\delta)}$ which are independent of $\delta$. See more details in the proof of Theorem \ref{thm1}.
\end{remark}

\subsection{Quasiderivatives and their properties}
In this subsection, we define the quasiderivatives and supermartingales. 

We start from introducing auxiliary functions. For constants $\kappa$ and $\lambda$ satisfying $0<\kappa<\lambda^2<\lambda<1$, let
\begin{align*}
D_\kappa=&\{x\in D:\kappa<\psi(x)\},\\
D^\lambda=&\{x\in D: \psi(x)<\lambda\},\\
D_\kappa^\lambda=&\{x\in D: \kappa<\psi(x)<\lambda\}.
\end{align*}
Note that $\lambda$ is constant throughout this article, and $\kappa$ is an arbitrary constant in the interval $(0,\lambda^2)$ in this subsection, which will approach zero in the proof of Theorems \ref{thm1}. The boundary auxiliary functions $\B_1(x,\xi)$, $r_1(x,\xi)$ and  $P_1(x,\xi)$ are defined from $D^\lambda\times\Rd$ to $\mathbb R^+$, $\mathbb R$ and $\operatorname{Skew}(d,\mathbb R)$, respectively, where $\operatorname{Skew}(d,\mathbb R)$ denotes the set of all $d\times d$-size skew-symmetric matrices, and $\pi_1^{\alpha\beta}(x,\xi)$ is from $A\times B\times D^\lambda\times\Rd$ to $\mathbb R^{d_1}$. The interior auxiliary functions $\B_2(x,\xi)$, $r_2(x,\xi)$ and  $P_2(x,\xi)$ are defined from  $D_{\lambda^2}\times\Rd$ to $\mathbb R^+$, $\mathbb R$, $\operatorname{Skew}(d,\mathbb R)$, respectively, and $\pi^{\alpha\beta}_2(x,\xi)$ is from $A\times B\times D_{\lambda^2}\times\Rd$ to $\mathbb R^{d_1}$. Precisely: 
\begin{itemize}
\item
In $D^\lambda\times\Rd$, we define
\begin{align*}
&\mathrm{B}_1(x,\xi)=\gamma_2\bigg[\gamma_1|\xi|^2+\pxsop\bigg],\\
&r_1(x,\xi)=-\frac{1}{|\psi_x|^2}\sum_{k=1}^d\psi_{x^k}(\psi_{x^k})_{(\xi)}+\frac{\psi_{(\xi)}}{\psi},\\
&P_1^{jk}(x,\xi)=\frac{(\psi_{x^j})_{(\xi)}\psi_{x^k}-(\psi_{x^k})_{(\xi)}\psi_{x^j}}{|\psi_x|^2},\\
&(\pi_1^{\alpha\beta})^k(x,\xi)=\frac{1}{2\gamma_2}\Big(1-\frac{\psi}{2\lambda}\Big)\bigg[\frac{\psi_{(\xi)}}{\psi}\psi_{(\sigma_k^{\alpha\beta})}+\gamma_1(\xi,\sigma_k^{\alpha\beta})\bigg],
\end{align*}
with
$$\gamma_1=1+\frac{1}{8\lambda}\psi\Big(1-\frac{1}{4\lambda}\psi\Big),\quad\gamma_2=\lambda^2+\psi\Big(1-\frac{1}{4\lambda}\psi\Big).$$
\item
In $D_{\lambda^2}\times\Rd$, we define
\allowdisplaybreaks\begin{align*}
&\mathrm{B}_2(x,\xi)=\lambda^{3\theta}\psi^{1-2\theta}\bigg[K_1|\xi|^2+\pxsop\bigg],\\
&r_2(x,\xi)=\theta\frac{\psi_{(\xi)}}{\psi},\\
&P_2^{jk}(x,\xi)=0,\\
&(\pi^{\alpha\beta}_2)^k(x,\xi)=\frac{\theta(1-2\theta)^2}{2(1-3\theta)\psi^2}\bigg[K_1\psi(\xi,\sigma^{\alpha\beta}_k)+\psi_{(\xi)}\psi_{(\sigma^{\alpha\beta}_k)}\bigg],
\end{align*}
where $\theta\in(0,1/3)$ and $K_1\in[1,\infty)$ are constants depending on $K_0$, $d$, $d_1$, $D$. 
\item
In $D\times\Rd$, for convenience of notation, we define
\begin{align*}
&\overline{\mathrm B}(x,\xi)=\mathbbm1_{x\in D^\lambda}\B_1(x,\xi)+\mathbbm1_{x\in \bar D_{\lambda^2}}\B_2(x,\xi),\\
&\underline\B(x,\xi)=\left\{
\begin{array}{ll}
\B_1(x,\xi)&\mbox{ in }D^{\lambda^2}\\
\B_1(x,\xi)\wedge\B_2(x,\xi)&\mbox{ in }\bar D_{\lambda^2}^\lambda\\
\B_2(x,\xi)&\mbox{ in }D_{\lambda}.
\end{array}
\right.
\end{align*}
\end{itemize}

The idea of introducing the auxiliary function $r(x,\xi)$ and $\pi^{\alpha\beta}(x,\xi)$ are due to changing the random time and probability space, respectively, from probability theory. The reason of introducing $P(x,\xi)$ is to make use of the invariance property in Assumption \ref{invc}. The auxiliary functions $\B_1(x,\xi)$ and $\B_2(x,\xi)$ will play the roles of boundary barrier and interior barrier, respectively, since based on our definition of the state process $x_t^{\alpha.\beta.,x}$ and its quasiderivative $\xi_t^{\alpha.\beta.,\xi}$ (cf. Equation (\ref{itoxi})), the processes $\B_1(x_t^{\alpha.\beta.,x},\xi_t^{\alpha.\beta,\xi})$ (resp. $\B_2(x_t^{\alpha.\beta.,x},\xi_t^{\alpha.\beta,\xi})$) is a local supermartingales when the state process $x_t^{\alpha.,\beta.,x}$ is in $D_\kappa^\lambda$ (resp. $D_{\lambda^2}$).

Now we construct the (first order) quasiderivatives $\xi_t^{\alpha.\beta.,\xi}$ with the help of the auxiliary functions $r,P,\pi$. Recall that for each $x\in D$, $\alpha.\in\mathfrak A$, $\beta.\in\mathfrak B$, the controlled diffusion process $x_t^{\alpha.\beta.,x}$ is defined by (\ref{itox}). Define the following stopping times of $x_t^{\alpha.\beta.,x}$:
\begin{align*}
\tau^{\alpha.\beta.,x}_\kappa=&\inf\{t\ge0: x_t^{\alpha.\beta.,x}\notin D_\kappa\},\\
\tau^{\alpha.\beta.,x}_{-1}=&0,\\
\tau^{\alpha.\beta.,x}_0=&\inf\{t\ge0:\psi(x_t^{\alpha.\beta.,x})\le \lambda^2\},\\
\tau^{\alpha.\beta.,x}_1=&\tau_\kappa^{\alpha.\beta.,x}\wedge\inf\{t\ge\tau^{\alpha.\beta.,x}_0:\psi(x_t^{\alpha.\beta.,x})\ge \lambda\},
\end{align*}
and recursively, for each $n\in\mathbb N$,
\begin{align*}
\tau^{\alpha.\beta.,x}_{2n}=&\tau_\kappa^{\alpha.\beta.,x}\wedge\inf\{t\ge\tau_{2n-1}^{\alpha.\beta.,x}:\psi(x_t^{\alpha.\beta.,x})\le \lambda^2\},\\
\tau^{\alpha.\beta.,x}_{2n+1}=&\tau_\kappa^{\alpha.\beta.,x}\wedge\inf\{t\ge\tau_{2n}^{\alpha.\beta.,x}:\psi(x_t^{\alpha.\beta.,x})\ge \lambda\}.
\end{align*}

For each $\epsilon\in[0,1]$, $(\alpha.,\beta.)\in\mathfrak A\times\mathfrak B$, $x, y\in  D_\kappa$, $\xi\in \Rd$, we consider the following $(3d+1)$-dimensional system of stochastic differential equations on $[0,\tau_\kappa^{\alpha.\beta.,x})$:
\allowdisplaybreaks\begin{align}
\label{itox}
x_t^{\alpha.\beta.,x}=&x+\int_0^t\sigma^{\alpha_s\beta_s}dw_s+\int_0^tb^{\alpha_s\beta_s}ds,\\
\label{itoxi}
\xi_t^{\alpha.\beta.,\xi}=&\xi+\int_0^t\Big[r^{\alpha.\beta.}_s\sigma^{\alpha_s\beta_s}+P^{\alpha.\beta.}_s\sigma^{\alpha_s\beta_s} \Big]dw_s\\
&+\int_0^t\Big[2r^{\alpha.\beta.}_sb^{\alpha_s\beta_s}-\sigma^{\alpha_s\beta_s} \pi^{\alpha.\beta.}_s\Big]ds,\nonumber\\
\label{itoxia}
\zeta_t^{\alpha.\beta.,\zeta}=&\zeta+\int_0^t\pi^{\alpha.\beta.}_sdw_s,\\
\label{itoy}
y_t^{\alpha.\beta.,y}( \epsilon)=&y+\int_0^t\sqrt{\theta_s^{\alpha.\beta.}(\epsilon)}Q_s^{\alpha.\beta.}(\epsilon)\sigma^{\alpha_s\beta_s}dw_s\\
&+\int_0^t\Big[\theta_s^{\alpha.\beta.}(\epsilon) b^{\alpha_s\beta_s}-\sqrt{\theta_s^{\alpha.\beta.}(\epsilon)}Q_s^{\alpha.\beta.}(\epsilon)\sigma^{\alpha_s\beta_s} \epsilon\pi_s^{\alpha.\beta.}\Big]ds,\nonumber
\end{align}
where for each $\alpha.\in\mathfrak A$, $\beta.\in\mathfrak B$ and $s\in [0,\tau_\kappa^{\alpha.\beta.,x})$, 
\begin{align*}
\MoveEqLeft(r_s^{\alpha.\beta.},P_s^{\alpha.\beta.},\pi_s^{\alpha.\beta.})\\
\nonumber&=\left\{
\begin{array}{rl}
(r_1,P_1,\pi^{\alpha_s\beta_s}_1)(x_s^{\alpha.\beta.,x},\xi_s^{\alpha.\beta.,\xi})&\mbox{when $s\in[\tau_{2n-2}^{\alpha.\beta.,x},\tau_{2n-1}^{\alpha.\beta.,x})$}\\
(r_2,P_2,\pi^{\alpha_s\beta_s}_2)(x_s^{\alpha.\beta.,x},\xi_s^{\alpha.\beta.,\xi})&\mbox{when $s\in[\tau_{2n-1}^{\alpha.\beta.,x},\tau_{2n}^{\alpha.\beta.,x})$},
\end{array}
\right.
\end{align*}
and
\begin{align}
\theta_s^{\alpha.\beta.}(\epsilon)&=1+\frac{1}{\pi}\arctan\big(\pi2\epsilon r^{\alpha.\beta.}_s\big),\\
Q_s^{\alpha.\beta.}(\epsilon)&=\exp({ \epsilon P_s^{\alpha.\beta.}}).\label{Qs}
\end{align}
It is worth pointing out that among (\ref{itox}) - (\ref{itoy}), only (\ref{itoxi}) is a system of stochastic differential equations, the other three are not.

Due to the definition of the auxiliary functions $r(x,\xi)$, $P(x,\xi)$, $\pi^{\alpha\beta}(x,\xi)$, the system of stochastic differential equations (\ref{itox}) - (\ref{itoy}) is uniquely solvable (cf. Lemma 6.1 in \cite{InteriorRegularityI}). Notice that $y_t^{\alpha.\beta.,y}(0)=x_t^{\alpha.\beta.,y}$, so $y_t^{\alpha.\beta.,y}(\epsilon)$ can be regarded as a perturbation of $x_t^{\alpha.\beta.,x}$. The process $\xi_t^{\alpha.\beta.,\xi}$ is called a first quasiderivative of $x_t^{\alpha.\beta.,x}$ at $x$ in the direction $\xi$ because (\ref{itoxi}) can be obtained by formally differentiate $y_t^{\alpha.\beta.,x+\epsilon\xi}(\epsilon)$ with respect to $\epsilon$ at $\epsilon=0$. And $\zeta_t^{\alpha.\beta.,\zeta}$ is called an adjoint process of $\xi_t^{\alpha.\beta.,\xi}$.

In the following lemma, we collect some properties of the first quasiderivatives $\xi_t^{\alpha.\beta.,\xi}$, all of which can be found in Lemma 7.4 in \cite{InteriorRegularityI}, by replacing $\sup_{\alpha.\in\mathfrak A}$ with $\sup_{(\alpha.,\beta.)\in\mathfrak A\times\mathfrak B}$.

\begin{lemma}\label{lemma2}
Fix any $x\in D_\kappa$ and $\xi\in\Rd$. We have the following estimates.
\begin{enumerate}
\item[(a1)] $\displaystyle{\sfa\sfb E^{\alpha.\beta.}_{x,\xi}\sup_{t\le\tau_\kappa}|\xi_t|^2\le N\overline{\mathrm B}(x,\xi)}$;
\item[(a2)] $\displaystyle{\sfa\sfb E^{\alpha.\beta.}_{x,\xi,0}\sup_{t\le\tau_\kappa}|\zeta_t|^2\le N\overline{\mathrm B}(x,\xi)}$;
\item[(a3)] $\displaystyle{\sfa\sfb E^{\alpha.\beta.}_{x,\xi}\int_0^{\tau_\kappa}\Big(|\xi_t|^2+\pxtsops\Big) dt\le N\overline{\mathrm B}(x,\xi)}$;
\item[(a4)] $\displaystyle{\sfa\sfb E^{\alpha.\beta.}_{x,\xi}\underline{\mathrm B}(x_\gamma,\xi_\gamma)\le 2\overline{\mathrm B}(x,\xi)}$, $\forall\gamma^{\alpha.\beta.,x}\le\tau_\kappa^{\alpha.\beta.,x}$;
\end{enumerate}
where $N$ is a constant depending only on $K_0,d,d_1$ and $\lambda$.

For any constants $T\in[1,\infty)$ and $p\in(0,\infty)$, we have
\begin{enumerate}
\item[(b1)] $\displaystyle\sfa\sfb E_{x,\xi}^{\alpha.\beta.}\sup_{t\le\tau_\kappa\wedge T}|\xi_t|^p<\infty$.
\end{enumerate}

Furthermore, introduce constants $q\in[0,p)$ and $\epsilon_0\in(0,1]$ satisfying $\{y:|y-x|\le\epsilon_0|\xi|\}\subset D_\kappa$. For each $\epsilon\in(0,\epsilon_0]$, define
\begin{align*}
\bar\tau_\kappa^{\alpha.\beta.,x+\epsilon\xi}=&\inf\{t\ge0: y_t^{\alpha.\beta.,x+\epsilon\xi}\notin D_\kappa\},\\
\iota_\kappa^{\alpha.\beta.,x,\xi}(\epsilon)=&\tau_\kappa^{\alpha.\beta.,x}\wedge\bar\tau_\kappa^{\alpha.\beta.,x+\epsilon\xi}.
\end{align*}
Let
$h:  A\times B\times \bar D_\kappa\rightarrow \mathbb R; (\alpha,\beta,x)\mapsto h^{\alpha\beta}(x)$ 
be a Borel function which is (globally) Lipschitz with respect $x$ in $\bar D_\kappa$, uniformly with respect to $(\alpha,\beta)$,  i.e. $\sup_{(\alpha,\beta)\in A\times B}|h^{\alpha\beta}(\cdot)|_{0,1,D_\kappa}<\infty$. We have the following convergence results.

\begin{enumerate}
\item[(c1)] $\displaystyle\lim_{\epsilon\downarrow0}\sfa\sfb E^{\alpha.\beta.}_{x,\xi}\sup_{t\le\iota_\kappa(\epsilon)\wedge T}\frac{|y_t^{\alpha.\beta.,x+\epsilon\xi}(\epsilon)-x_t^{\alpha.\beta.,x}|^p}{\epsilon^{q}}=0$;
\item[(c2)] $\displaystyle\lim_{\epsilon\downarrow0}\sfa\sfb E^{\alpha.\beta.}_{x,\xi}\sup_{t\le\iota_\kappa(\epsilon)\wedge T}\Big|\frac{y_t^{\alpha.\beta.,x+\epsilon\xi}(\epsilon)-x_t^{\alpha.\beta.,x}}{\epsilon}-\xi_t^{\alpha.\beta.,\xi}\Big|^p=0$;
\item[(c3)] $\displaystyle\lim_{\epsilon\downarrow0}\sfa\sfb E^{\alpha.\beta.}_{x,\xi}\sup_{t\le\iota_\kappa(\epsilon)\wedge T}\frac{|h^{\alpha_t\beta_t}(y_t(\epsilon))-h^{\alpha_t\beta_t}(x_t)|^p}{\epsilon^{q}}=0$;
\item[(c4)] $\displaystyle\lim_{\epsilon\downarrow0}\sfa\sfb E^{\alpha.\beta.}_{x,\xi}\sup_{t\le\iota_\kappa(\epsilon)\wedge T}\bigg|\frac{h^{\alpha_t\beta_t}(y_t(\epsilon))-h^{\alpha_t\beta_t}(x_t)}{\epsilon}-h^{\alpha_t\beta_t}_{(\xi_t)}(x_t)\bigg|^p=0,$ if furthermore, for all $(\alpha,\beta)\in(A\times B)$, $h^{\alpha\beta}(\cdot)\in C^1(\bar D_\kappa)$, and $h_x^{\alpha\beta}(\cdot)$ are continuous in $x$, uniformly with respect to $(\alpha,\beta)$;
\end{enumerate}

\end{lemma}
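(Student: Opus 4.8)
The plan is to follow the proof of Lemma 7.4 in \cite{InteriorRegularityI} essentially line by line, replacing $\sup_{\alpha.\in\mathfrak A}$ throughout by $\sup_{(\alpha.,\beta.)\in\mathfrak A\times\mathfrak B}$ and exploiting the simplification, unavailable there, that $\sigma^{\alpha\beta},b^{\alpha\beta},c^{\alpha\beta}$ do not depend on the spatial variable. First note that $r_i$, $P_i$ and $\pi^{\alpha\beta}_i$ are each homogeneous of degree one in $\xi$, so that along a fixed path of $x_t^{\alpha.\beta.,x}$ the equation (\ref{itoxi}) for $\xi_t$ is a \emph{linear} SDE in $\xi_t$, containing no $\sxt$ or $\bxt$ terms precisely because $\sigma$ and $b$ are spatially constant; and (\ref{itoxia}) then exhibits $\zeta_t$ as the stochastic integral $\zeta+\int_0^t\pi_s\,dw_s$. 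On any interval $[0,\tau_\kappa\wedge T]$ the denominators $\psi,\psi^2,|\psi_x|^2$ occurring in $r_i,\pi_i$ stay bounded away from zero (for $\lambda$ fixed small, using $|\psi_x|\ge 1$ on $\partial D$ for the $|\psi_x|^2$ in $r_1,P_1$), so the coefficients are bounded there; unique solvability is then clear and (b1) follows from the Burkholder--Davis--Gundy (BDG) inequality together with Gr\"onwall's lemma.

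The substance of (a1)--(a4) is the local supermartingale property of the barriers: $\B_1(x_t^{\alpha.\beta.,x},\xi_t^{\alpha.\beta.,\xi})$ on the intervals $[\tau_{2n-2},\tau_{2n-1})$, where the state process lies in $D_\kappa^\lambda$, and $\B_2(x_t,\xi_t)$ on the intervals $[\tau_{2n-1},\tau_{2n})$, where it lies in $D_{\lambda^2}$. On each such interval one applies It\^o's formula and must verify that the resulting $dt$-coefficient is $\le 0$ --- in fact that it is bounded above by $-c\big(|\xi_t|^2+\pxtsops\big)$ for some $c=c(K_0,d,d_1,\lambda)>0$. This single computation is the crux of the lemma and is neither short nor automatic: the explicit forms of $r_1,P_1,\pi_1,\gamma_1,\gamma_2$ (resp.\ $r_2,P_2,\pi_2,\theta,K_1$) are reverse-engineered so that the first- and second-order contributions of (\ref{itoxi}), combined with the drift and diffusion of $x_t$ and with $\psi_{xx},\psi_x$, are cancelled or dominated by the negative terms; the skew-symmetric $P_i$ enters only through the orthogonal factor $Q_s(\epsilon)=\exp(\epsilon P_s)$ in (\ref{itoy}), whose effect is to conjugate $a^{\alpha\beta}$ by an orthogonal matrix --- harmless by Assumption \ref{invc}. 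Granting the drift inequality, write $\B_i(x_t,\xi_t)=\B_i(x,\xi)+M_t^{(i)}-A_t^{(i)}$ with $M^{(i)}$ a local martingale and $A^{(i)}$ nondecreasing; optional stopping gives the supermartingale statement on each sub-interval, and chaining over the stopping times $\tau_n$ (a.s.\ finite by Lemma \ref{lemma0}), using the matching of $\B_1$ with $\B_2$ on $\bar D_{\lambda^2}^\lambda$ built into the definitions of $\overline{\mathrm B}$ and $\underline{\mathrm B}$, produces (a4). Integrating the drift inequality rather than discarding $A^{(i)}$, and using (a4) to control the boundary terms, gives (a3); (a1) then follows from the same It\^o decomposition --- whose martingale part $M^{(i)}$ has quadratic variation dominated by the integrand of (a3) --- via BDG and standard martingale maximal inequalities; and (a2) is immediate since $\zeta_t$ is a stochastic integral, so $E\sup_{t\le\tau_\kappa}|\zeta_t|^2\le N\,E\int_0^{\tau_\kappa}|\pi_s|^2\,ds$ by BDG, with $|\pi_s|^2\lesssim|\xi_s|^2+\psi_{(\xi_s)}^2\psi^{-2}$ handled by (a3).

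For the convergence assertions (c1)--(c4), the key observation is that $\theta^{\alpha.\beta.}_s(\epsilon)=1+O(\epsilon)$ and $Q^{\alpha.\beta.}_s(\epsilon)=I+O(\epsilon)$, uniformly on $D_\kappa$, since $\arctan$ and $\exp$ are smooth and $r_s,P_s$ are bounded there. Hence the coefficients of the SDE for $y_t^{\alpha.\beta.,x+\epsilon\xi}(\epsilon)$ converge to those of $x_t^{\alpha.\beta.,x}$; subtracting the two equations, stopping at $\iota_\kappa(\epsilon)\wedge T$, and running BDG and Gr\"onwall --- using only boundedness, not differentiability, of $\sigma,b$ --- gives (c1). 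For (c2) one writes down the SDE satisfied by $\epsilon^{-1}\big(y_t^{\alpha.\beta.,x+\epsilon\xi}(\epsilon)-x_t^{\alpha.\beta.,x}\big)-\xi_t^{\alpha.\beta.,\xi}$: its diffusion and drift coefficients are $\epsilon^{-1}$ times the differences between the increments $\big(\sqrt{\theta(\epsilon)}Q(\epsilon)-I\big)\sigma$, $\big(\theta(\epsilon)-1\big)b$, $-\sqrt{\theta(\epsilon)}Q(\epsilon)\sigma\,\epsilon\pi$ appearing in (\ref{itoy}) and their formal $\epsilon$-derivatives $(r+P)\sigma$, $2rb$, $-\sigma\pi$ appearing in (\ref{itoxi}), all of which are $o(1)$ by Taylor-expanding $\theta(\cdot),Q(\cdot)$ and by (c1); a further BDG/Gr\"onwall pass closes (c2). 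Finally (c3) follows from (c1) by the uniform Lipschitz continuity of $h^{\alpha\beta}$, and (c4) from (c1)--(c2) by writing $h^{\alpha_t\beta_t}(y_t(\epsilon))-h^{\alpha_t\beta_t}(x_t)=\int_0^1 h^{\alpha_t\beta_t}_x\big(x_t+r(y_t(\epsilon)-x_t)\big)\cdot\big(y_t(\epsilon)-x_t\big)\,dr$ and using the uniform continuity of $h^{\alpha\beta}_x$. Throughout, the only recurring subtlety --- and the reason the standing hypotheses impose uniform-in-$(\alpha,\beta)$ bounds and moduli of continuity --- is that every estimate must be uniform over $\mathfrak A\times\mathfrak B$; this is the sole change from the single-player argument in \cite{InteriorRegularityI}, and the It\^o drift computation of the previous paragraph remains the one genuinely substantive step.
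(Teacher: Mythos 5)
Your proposal follows exactly the route the paper takes: its proof of Lemma \ref{lemma2} consists precisely of going through Sections 6 and 7 (Lemma 7.4) of \cite{InteriorRegularityI} with $\mathfrak A$ replaced by $\mathfrak A\times\mathfrak B$, checking that every estimate is uniform over both players, and your sketch fills in the same internal machinery (the It\^o drift inequality making $\B_1,\B_2$ local supermartingales, BDG/Gr\"onwall for (a1)--(b1), Taylor expansion of $\theta(\epsilon),Q(\epsilon)$ for (c1)--(c4)). The only stray remark is that Assumption \ref{invc} plays no role in this lemma (it enters later, in Lemma \ref{lemma3} and the proof of Theorem \ref{thm1}), but this does not affect your argument.
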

\begin{proof}
It suffices to go through Sections 6 and 7 in \cite{InteriorRegularityI} by replacing $\mathfrak A$ with $\mathfrak A\times\mathfrak B$.
\end{proof}

\subsection{Stochastic representations and dynamic programming principle} In this subsection, $\epsilon$ is fixed.

\begin{lemma}\label{lemma3} Recall that $v^+$ is defined by (\ref{payoff}) and (\ref{uppervalue}). Also assume that there is a constant $\delta\in(0,1]$ such that for all $\alpha\in A$, $\beta\in B$, $\eta\in\Rd$, we have
$$\delta|\eta|^2\le a^{\alpha\beta}_{ij}\eta^i\eta^j,$$
i.e. the diffusion matrices are uniformly non-degenerate.
\begin{enumerate}
\item[(i)] Let $\varkappa^{\alpha.\beta.,x}$ be an $\{\mathcal F_t\}$-stopping time defined for each $\alpha.\in\mathfrak A$, $\beta\in\mathfrak B$, $x\in D$, such that $\varkappa^{\alpha.\beta.,x}\le \tau^{\alpha.\beta.,x}$. Then the dynamic programming principle with random stopping times holds, i.e. for any $x\in D$,
\begin{equation}\label{dpp}
v^+(x)=\sfm\ifb E_x^{\boldsymbol\mu(\beta.)\beta.}\Big[v^+(x_\varkappa)e^{-\phi_\varkappa}+\int_0^\varkappa f^{\alpha_t\beta_t}(x_t)e^{-\phi_t}dt\Big].
\end{equation}

\item[(ii)] Let $\mathfrak Q$ be the set of all progressively-measurable processes $q_t$ with value in $\mathbb O_d$ for all $t\ge0$, and for each $z\in D$, $\alpha.\in\mathfrak A$, $\beta.\in\mathfrak B$, $q.\in\mathfrak Q$, define
\begin{align}
\label{itoz}z_t^{\alpha.\beta.q.,z}( \epsilon)=&z+\int_0^t\sqrt{\theta_s^{\alpha.\beta.}(\epsilon)}q_s\sigma^{\alpha_s\beta_s}dw_s\\
&+\int_0^t\Big[\theta_s^{\alpha.\beta.}(\epsilon) b^{\alpha_s\beta_s}-\sqrt{\theta_s^{\alpha.\beta.}(\epsilon)}q_s\sigma^{\alpha_s\beta_s} \epsilon\pi_s^{\alpha.\beta.}\Big]ds,\nonumber
\end{align}
and
$$\hat\tau^{\alpha.\beta.q.,z}(\epsilon)=\inf\{t\ge0: z_t^{\alpha.\beta.q.,z}\notin D\}.$$

Let $\gamma^{\alpha.\beta.q.,z}$ be an $\{\mathcal F_t\}$-stopping time defined for each $\alpha.\in\mathfrak A$, $\beta\in\mathfrak B$, $q.\in\mathfrak Q$, $z\in D$, such that $\gamma^{\alpha.\beta.q.,z}\le \hat\tau^{\alpha.\beta.q.,z}(\epsilon)$.

If Assumption \ref{invc} holds by satisfying Condition (\ref{othinv}), then for all $z\in D$, 
\begin{align}\label{dynamic}
v^+(z)=&\sup_{\boldsymbol\rho\in\mathcal P} \inf_{(\beta.,q.)\in\mathfrak B\times\mathfrak Q}E_{z}^{\boldsymbol\rho(\beta.,q.)\beta.q.}\Big[v^+(z_\gamma(\epsilon))p_\gamma(\epsilon)e^{-\phi_\gamma(\epsilon)}\\
&+\int_0^\gamma\theta_t(\epsilon)f^{\alpha_t\beta_t}(z_t(\epsilon))p_t(\epsilon)e^{-\phi_t(\epsilon)}dt\Big],\nonumber
\end{align}
where 
\begin{align}
\label{phit}
\phi_t^{\alpha.\beta.}( \epsilon)=&\int_0^t\theta_s^{\alpha.\beta.}(\epsilon)c^{\alpha_s\beta_s}ds,\\
\label{pt}
p_t^{\alpha.\beta.}( \epsilon)=&\exp\bigg(\int_0^t \epsilon\pi^{\alpha.\beta.}_sdw_s-\frac{1}{2}\int_0^t| \epsilon\pi^{\alpha.\beta.}_s|^2ds\bigg).
\end{align}
If Assumption \ref{invc} holds by satisfying Condition (\ref{othinv2}), we have an equation analogous to (\ref{dynamic}). 
\end{enumerate}
\end{lemma}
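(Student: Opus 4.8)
\emph{Plan of proof.} For part (i), the non-degeneracy hypothesis makes $v^+$ the value of a uniformly nondegenerate upper game, so by Theorem 4.3 in \cite{MR2500878} it is the unique bounded viscosity solution in $C(\bar D)$ of (\ref{ip}); in particular $v^+\in C(\bar D)$. I would then prove (\ref{dpp}) in two stages: first for the deterministic cut-offs $\varkappa=t\wedge\tau$, via the sub- and super-optimality inequalities in the Elliott--Kalton strategy formulation (concatenating strategies at time $t$); then for an arbitrary $\{\mathcal F_t\}$-stopping time $\varkappa\le\tau$, by approximating $\varkappa$ from above by stopping times with countably many values, applying the deterministic-time principle and the strong Markov property at each level, and passing to the limit using the uniform continuity of $v^+$ on $\bar D$ and the moment bound $\sfa\sfb E^{\alpha.\beta.}_x\tau^n\le n!|\psi|_{0,D}^{n-1}\psi(x)$ of Lemma \ref{lemma0}. (One may instead appeal directly to \cite{MR2500878,Krylov:2012uq}.)

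For part (ii) assume first that Condition (\ref{othinv}) holds. The idea is to realize $v^+$ as the value of an enlarged game and then transform it by a change of measure and a random time change. Consider the game with state space $D$, payoffs $f^{\alpha\beta},g$, discount $c^{\alpha\beta}$, in which player I chooses $\alpha.\in\mathfrak A$ through strategies $\boldsymbol\rho\in\mathcal P$ (the non-anticipating $\mathfrak A$-valued maps on $\mathfrak B\times\mathfrak Q$), while player II chooses $(\beta.,q.)\in\mathfrak B\times\mathfrak Q$, with state process $\hat z_t=z+\int_0^t q_s\sigma^{\alpha_s\beta_s}\,dw_s+\int_0^t b^{\alpha_s\beta_s}\,ds$. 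Conjugation by an orthogonal matrix preserves $\|\cdot\|$ and the non-degeneracy constant, and $\mathbb{O}_d$ is compact metric, so this enlarged game satisfies the hypotheses of Theorem 4.3 in \cite{MR2500878}; thus its value $\hat v$ is the unique bounded viscosity solution in $C(\bar D)$ of the Dirichlet problem for $\inf_{(\beta,q)\in B\times\mathbb{O}_d}\sup_{\alpha\in A}H^{\alpha\beta q}=0$ with boundary data $g$. By Condition (\ref{othinv}) this operator is identically $H^+$, so $\hat v$ solves the same Dirichlet problem as $v^+$ and hence $\hat v=v^+$ by uniqueness.

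Now I would apply part (i) to this enlarged (still uniformly nondegenerate) game: writing $\bar\tau$ for the first exit time of $\hat z$ from $D$ and $\bar\phi_t=\int_0^t c^{\alpha_s\beta_s}\,ds$, for every $\{\mathcal F_t\}$-stopping time $\bar\gamma\le\bar\tau$,
\[
v^+(z)=\sup_{\boldsymbol\rho\in\mathcal P}\inf_{(\beta.,q.)\in\mathfrak B\times\mathfrak Q}E_z^{\boldsymbol\rho(\beta.,q.)\beta.q.}\Big[v^+(\hat z_{\bar\gamma})e^{-\bar\phi_{\bar\gamma}}+\int_0^{\bar\gamma}f^{\alpha_t\beta_t}(\hat z_t)e^{-\bar\phi_t}\,dt\Big].
\]
It remains, with $q.$ fixed, to convert this into (\ref{dynamic}) by Girsanov's theorem and a time change. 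Under an equivalent change of measure $\tilde P$ with density process $p_t(\epsilon)$ (valid by a localization argument using the moment estimates of Lemma \ref{lemma2}, cf. \cite{InteriorRegularityI}), the process $\tilde w_t:=w_t-\int_0^t\epsilon\pi^{\alpha.\beta.}_s\,ds$ is a Wiener process, under which $z_t(\epsilon)$ from (\ref{itoz}) satisfies $dz_t(\epsilon)=\sqrt{\theta_t(\epsilon)}\,q_t\sigma^{\alpha_t\beta_t}\,d\tilde w_t+\theta_t(\epsilon)\,b^{\alpha_t\beta_t}\,dt$; since $\theta_s(\epsilon)$ lies between two fixed positive constants, the absolutely continuous time change $A_t=\int_0^t\theta_s(\epsilon)\,ds$ turns $u\mapsto z_{A^{-1}(u)}(\epsilon)$, under $\tilde P$, into a copy of $\hat z$, with $\hat\tau(\epsilon)\mapsto\bar\tau=A(\hat\tau(\epsilon))$, $\gamma\mapsto\bar\gamma:=A(\gamma)$, $\phi_\gamma(\epsilon)=\bar\phi_{\bar\gamma}$, and $\theta_t(\epsilon)\,dt$ becoming $du$. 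Rewriting $E^P[p_t(\epsilon)X_t]=E^{\tilde P}[X_t]$ for bounded $\mathcal F_t$-measurable $X_t$ (martingale property of $p_t(\epsilon)$, and Fubini for the running-cost term), then undoing the time change in the displayed identity, gives exactly (\ref{dynamic}). If instead Condition (\ref{othinv2}) holds, the same argument with \emph{player I}'s control space enlarged to $A\times\mathbb{O}_d$ yields the stated analogue of (\ref{dynamic}).

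The technical heart, and the step I expect to be the main obstacle, is this last one: executing the Girsanov change of measure and the random time change simultaneously while correctly matching the exit times, the stopping time $\gamma$, and the discount factor, and supplying the integrability (localization for the density, Fubini) needed to make the identities rigorous — all ultimately resting on the structural bounds built into the auxiliary functions $r,P,\pi,\theta$ and the moment estimates of Lemmas \ref{lemma0} and \ref{lemma2}. (A lesser point is the general-stopping-time upgrade in part (i), if done from scratch rather than quoted.)
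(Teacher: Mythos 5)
Your part (i) and the first step of part (ii) (enlarging player II's control set to $B\times\mathbb O_d$, using Condition (\ref{othinv}) and uniqueness of viscosity solutions of the nondegenerate Isaacs equation to identify the enlarged game's value with $v^+$) coincide with what the paper does, up to which of Krylov's/Kovats' results is quoted. The gap is in your final step, where you treat the Girsanov transformation and the random time change as per-control identities that can simply be substituted into the sup--inf of the dynamic programming identity for $\hat z$. Both transformations are control- and strategy-dependent: the density $p_t(\epsilon)$, the drift correction, and the clock $A_t=\int_0^t\theta_s(\epsilon)\,ds$ all depend on $(\boldsymbol\rho(\beta.,q.),\beta.,q.)$ through $r_s,\pi_s$, so after the change of measure/time the expectation is computed with respect to a new Wiener process and filtration, under which the given strategies and policies are no longer the admissible objects entering the definition of the value. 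Equality of the two sup--inf expressions is therefore not a consequence of the per-control identity $E^P[p_\gamma(\epsilon)X]=E^{\bar P}[X]$; one needs to know that the game value $\sup_{\boldsymbol\rho}\inf_{(\beta.,q.)}$ is independent of the underlying probability space and Wiener process, and then argue each inequality separately via $\epsilon$-optimal strategies. This is exactly how the paper closes the argument: it invokes Theorem 2.3 and Remark 2.4 of \cite{Krylov:2012fk} (with $\theta$ playing the role of the time-change coefficient, so the stopping-time DPP for the $\theta$-modified process comes packaged with the invariance statement) together with Lemma 2.1 of \cite{MR637615}, proves ``$\le$'' with one Girsanov transformation, and proves ``$\ge$'' with a different change of measure after adjoining the auxiliary equation $y_t(\epsilon)=1+\epsilon\int_0^t\pi_s y_s\,ds$. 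Your proposal flags integrability and the matching of exit times as the technical heart, but the real obstruction is this interchange of a control-dependent transformation with the sup--inf over non-anticipating strategies, which your plan does not address.

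A secondary point: your ``apply part (i) to the enlarged game, then undo an absolutely continuous time change'' treats the time change as a pathwise operation, but inside a game the time-changed controls, filtration and strategies must again be shown to realize the same value; the paper avoids this by quoting Krylov's Theorem 2.3 in \cite{Krylov:2012fk}, which is formulated precisely to absorb the factor $\theta$ into the coefficients within the sup--inf, rather than by a bare time-change argument. With those two repairs (invariance of the value plus the two-sided $\epsilon$-optimal strategy argument, and the time change handled at the level of the game rather than pathwise), your outline matches the paper's proof.
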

\begin{proof}
Equation (\ref{dpp}) is essentially the same as Equation (2.6) in \cite{Krylov:2012fk} for $\lambda_t^{\alpha.\beta.,x}\equiv0$. Therefore to prove (i), it suffices to observe that our Assumption \ref{geod} implies Assumption 2.2 in \cite{Krylov:2012fk}. Consequently, Theorem 2.1 in \cite{Krylov:2012fk} is applicable in our situation.

In the proof of (ii), we first make use of the invariance property, i.e. Assumption \ref{invc}, to involve $q_s$ in the original stochastic representation of $v^+$. Then we apply Theorems 2.3 in \cite{Krylov:2012fk} to deal with $\theta_t^{\alpha.\beta.}$ coming from random time change. We last apply Girsanov's Theorem to add $\pi_t^{\alpha.\beta.}$ into the picture.

First, due to the uniform non-degeneracy assumption in this lemma, $v^+$ is the unique viscosity solution of $H^+(u_{xx}, u_x, u, x)=0$, so by Condition (\ref{othinv}) in Assumption 2.2, it is also the unique viscosity solution of 
\begin{equation}\label{isaacsq}
\inf_{(\beta,q)\in B\times\mathbb O_d}\sup_{\alpha\in A}H^{\alpha\beta q}( u_{xx}, u_x, u, x)=0.
\end{equation}
On the other hand, since the elliptic equation (\ref{isaacsq}) is still uniform non-degenerate, we know that its unique viscosity solution (under the same boundary condition) has the stochastic representation
$$v^+(x)=\sup_{\boldsymbol\rho\in\mathcal P} \inf_{(\beta.,q.)\in\mathfrak B\times\mathfrak Q}E_{z}^{\boldsymbol\rho(\beta.,q.)\beta.q.}\bigg[\int_0^{\tilde\tau} f^{\alpha_t\beta_t}(\tilde x_t)e^{-\phi_t}dt+g(\tilde x_{\tilde\tau})e^{-\phi_{\tilde\tau}}\bigg],$$
where
$$\tilde x_t^{\alpha.\beta.q.,x}=x+\int_0^t q_s\sigma^{\alpha_s\beta_s}dw_s+\int_0^t b^{\alpha_s\beta_s}ds.$$

We proceed by considering the set 
$$\{(\theta,x):\theta\in\mathbb R^+, x\in D)\},$$
where $\theta$ will play the role of $p$  in  \cite{Krylov:2012fk}. Following the setup after Theorem 2.2 in \cite{Krylov:2012fk}, we define
\begin{align*}
&\check \sigma^{\alpha\beta q}(\theta,x)=\sqrt\theta (q\sigma^{\alpha\beta}),\\
&\check a^{\alpha\beta q}(\theta,x)=(1/2)\check \sigma^{\alpha\beta q}(\theta,x)(\check \sigma^{\alpha\beta q}(\theta, x))^*,\\
&(\check b^{\alpha\beta q},\check c^{\alpha\beta q},\check f^{\alpha\beta q})(\theta, x)=\theta (b^{\alpha\beta}, c^{\alpha\beta},f^{\alpha\beta}(x)),\\
&\check r^{\alpha\beta q}(\theta,x)=\theta, \quad\bar\theta=1,
\end{align*}
where $(\beta,q)\in B\times\mathbb O_d$ here plays the role of $\beta\in B$ in \cite{Krylov:2012fk}. Notice that
$$(\check a^{\alpha\beta q},\check b^{\alpha\beta q},\check c^{\alpha\beta q},\check f^{\alpha\beta q})(\theta,x)=\theta(qa^{\alpha\beta}q^*,b^{\alpha\beta},c^{\alpha\beta},f^{\alpha\beta}(x)).$$
By applying Theorem 2.3 in \cite{Krylov:2012fk} for $\lambda_t^{\alpha.\beta.}\equiv0$, $\Pi=\operatorname{id}$, $F_t=t$ and $\psi\equiv1$, 
we obtain that
\begin{align*}
v^+(z)=&\sup_{\boldsymbol\rho\in\mathcal P} \inf_{(\beta.,q.)\in\mathfrak B\times\mathfrak Q}E_{z}^{\boldsymbol\rho(\beta.,q.)\beta.q.}\bigg[v^+(\check x_{\check\gamma}(\epsilon))e^{-\phi_{\check\gamma}(\epsilon)}\\
&+\int_0^{\check\gamma}\theta_t(\epsilon)f^{\alpha_t\beta_t}(\check x_t(\epsilon))e^{-\phi_t(\epsilon)}dt\bigg],
\end{align*}
with
\begin{align}\label{checkx}
\check x_t^{\alpha.\beta.q.,\check x}( \epsilon)=\check x+\int_0^t\sqrt{\theta_s^{\alpha.\beta.}(\epsilon)}q_s\sigma^{\alpha_s\beta_s}dw_s+\int_0^t\theta_s^{\alpha.\beta.}(\epsilon) b^{\alpha_s\beta_s}ds,
\end{align}
which is exactly (\ref{dynamic}) when $\pi_t^{\alpha.\beta.}\equiv0$.

Last, by Remark 2.4 in \cite{Krylov:2012fk}, the function $v^+(\cdot)$ is uniquely defined once the functions $\sigma, b,c, f, g$ are given, so it does not depend on the probability space and the Wiener process. Therefore by Lemma 2.1 in \cite{MR637615} with minor modifications we obtain (\ref{dynamic}). More precisely, let
\begin{align}
\label{321}u^{\alpha.\beta.q.}(z;\gamma)
=&E_{z}^{\alpha.\beta.q.}\bigg[v^+(\check x_{\gamma}(\epsilon))e^{-\phi_{\gamma}(\epsilon)}+\int_0^{\gamma}\theta_t(\epsilon)f^{\alpha_t\beta_t}(\check x_t(\epsilon))e^{-\phi_t(\epsilon)}dt\bigg],\\
w^{\alpha.\beta.q.}(z;\gamma)
=&E_{z}^{\alpha.\beta.q.}\bigg[v^+(z_\gamma(\epsilon))p_\gamma(\epsilon)e^{-\phi_\gamma(\epsilon)}+\int_0^\gamma\theta_t(\epsilon)f^{\alpha_t\beta_t}(z_t(\epsilon))p_t(\epsilon)e^{-\phi_t(\epsilon)}dt\bigg].
\end{align}
By Girsanov's Theorem, for each $(\alpha.,\beta.,q.)\in\mathfrak A\times\mathfrak B\times\mathfrak Q$, $\gamma=\gamma^{\alpha.\beta.q.,z}\le\hat\tau^{\alpha.\beta.q.,z}(\epsilon)$, $w^{\alpha.\beta.q.}(z;\gamma)$ can be represented as $u^{\alpha.\beta.q.}(z;\gamma)$, if in the definition of $u^{\alpha.\beta.q.}(z;\gamma)$, (\ref{321}), $\check x_t^{\alpha.\beta.q.,\check x}(\epsilon)$ is replaced by $z_t^{\alpha.\beta.q.,z}(\epsilon)$, with the Wiener process $\bar w_t$ and the probability space $\bar P$, where
$$\bar P(d\omega)=p^{\alpha.\beta.}_\gamma(\epsilon)P(d\omega),$$
$$\bar w_t=w_t-\epsilon\int_0^{t\wedge\gamma}\pi_s^{\alpha.\beta.}ds.$$
By Theorem 2.3 in \cite{Krylov:2012fk} for $\lambda_t^{\alpha.\beta.}\equiv0$, $\Pi=\operatorname{id}$, $F_t=t$ and $\psi\equiv1$, 
$$\sup_{\boldsymbol\rho\in\mathcal P} \inf_{(\beta.,q.)\in\mathfrak B\times\mathfrak Q}u^{\boldsymbol\rho(\beta.,q.)\beta.q.}(z;\gamma^{\boldsymbol\rho(\beta.,q.)\beta.q.})$$ 
does not depend on the probability space or the Wiener process. As a result, there exists a $\boldsymbol\rho_0\in\mathcal P$, such that for any $\epsilon>0$, $(\beta.,q.)\in\mathfrak B\times\mathfrak Q$, we have
\begin{align*}
v^+(z)=&\sup_{\boldsymbol\rho\in\mathcal P} \inf_{(\beta.,q.)\in\mathfrak B\times\mathfrak Q}u^{\boldsymbol\rho(\beta.,q.)\beta.q.}(z;\gamma^{\boldsymbol\rho(\beta.,q.)\beta.q.})\\
\le&\inf_{(\beta.,q.)\in\mathfrak B\times\mathfrak Q}u^{\boldsymbol\rho_0(\beta.,q.)\beta.q.}(z;\gamma^{\boldsymbol\rho_0(\beta.,q.)\beta.q.})+\epsilon\\
\le&\inf_{(\beta.,q.)\in\mathfrak B\times\mathfrak Q}w^{\boldsymbol\rho_0(\beta.,q.)\beta.q.}(z;\gamma^{\boldsymbol\rho_0(\beta.,q.)\beta.q.})+\epsilon\\
\le&\sup_{\boldsymbol\rho\in\mathcal P}\inf_{(\beta.,q.)\in\mathfrak B\times\mathfrak Q}w^{\boldsymbol\rho(\beta.,q.)\beta.q.}(z;\gamma^{\boldsymbol\rho(\beta.,q.)\beta.q.})+\epsilon.
\end{align*}
Therefore, the left-hand side of (\ref{dynamic}) doesn't exceed the right-hand side. 

In order to prove the reverse inequality, we add one more equation to the system (\ref{itoz}):
$$y^{\alpha.\beta.q.,y}_t(\epsilon)=y+\epsilon\int_0^t\pi_s^{\alpha.\beta.}y_s^{\alpha.\beta.q.,y}(\epsilon)ds.$$
If we define
\begin{align*}
\MoveEqLeft w^{\alpha.\beta.q.}(z,y;\gamma)\\
=&E_{z,y}^{\alpha.\beta.q.}\bigg[v^+(z_\gamma(\epsilon))y_\gamma(\epsilon)e^{-\phi_\gamma(\epsilon)}+\int_0^\gamma\theta_t(\epsilon)f^{\alpha_t\beta_t}(z_t(\epsilon))y_t(\epsilon)e^{-\phi_t(\epsilon)}dt\bigg],
\end{align*}
then
$$w^{\alpha.\beta.q.}(z;\gamma)=w^{\alpha.\beta.q.}(z,1;\gamma).$$
By Girsanov's Theorem, for each $(\alpha.,\beta.,q.)\in\mathfrak A\times\mathfrak B\times\mathfrak Q$, $\gamma=\gamma^{\alpha.\beta.q.,z}\le\hat\tau^{\alpha.\beta.q.,z}(\epsilon)$, we have
\begin{align}\label{basedon}
u^{\alpha.\beta.q.}(z;\gamma)=w^{\alpha.\beta.q.}(z,1;\gamma),
\end{align}
if $w^{\alpha.\beta.q.}(z,1;\gamma)$ is constructed on the probability space $\bar P$ with Wiener process $\bar w_t$ where
$$\bar P(d\omega)=\exp\bigg(-\int_0^\gamma \epsilon\pi^{\alpha.\beta.}_sdw_s-\frac{1}{2}\int_0^\gamma| \epsilon\pi^{\alpha.\beta.}_s|^2ds\bigg)P(d\omega),$$
$$\bar w_t=w_t+\epsilon\int_0^{t\wedge\gamma}\pi_s^{\alpha.\beta.}ds.$$
Since by Theorem 2.3 in \cite{Krylov:2012fk},
$$\sup_{\boldsymbol\rho\in\mathcal P} \inf_{(\beta.,q.)\in\mathfrak B\times\mathfrak Q}w^{\boldsymbol\rho(\beta.,q.)\beta.q.}(z,1;\gamma^{\boldsymbol\rho(\beta.,q.)\beta.q.})$$ 
does not depend on the probability space or the Wiener process, based on (\ref{basedon}), we have, there exists a $\boldsymbol\rho_0\in\mathcal P$, such that for any $\epsilon>0$, $(\beta.,q.)\in\mathfrak B\times\mathfrak Q$, \begin{align*}
v^+(z)=&\sup_{\boldsymbol\rho\in\mathcal P} \inf_{(\beta.,q.)\in\mathfrak B\times\mathfrak Q}u^{\boldsymbol\rho(\beta.,q.)\beta.q.}(z;\gamma^{\boldsymbol\rho(\beta.,q.)\beta.q.})\\
\ge&\inf_{(\beta.,q.)\in\mathfrak B\times\mathfrak Q}u^{\boldsymbol\rho_0(\beta.,q.)\beta.q.}(z;\gamma^{\boldsymbol\rho_0(\beta.,q.)\beta.q.})\\
\ge&\inf_{(\beta.,q.)\in\mathfrak B\times\mathfrak Q}w^{\boldsymbol\rho_0(\beta.,q.)\beta.q.}(z,1;\gamma^{\boldsymbol\rho_0(\beta.,q.)\beta.q.})\\
\ge&\sup_{\boldsymbol\rho\in\mathcal P}\inf_{(\beta.,q.)\in\mathfrak B\times\mathfrak Q}w^{\boldsymbol\rho(\beta.,q.)\beta.q.}(z,1;\gamma^{\boldsymbol\rho(\beta.,q.)\beta.q.})-\epsilon\\
=&\sup_{\boldsymbol\rho\in\mathcal P}\inf_{(\beta.,q.)\in\mathfrak B\times\mathfrak Q}w^{\boldsymbol\rho(\beta.,q.)\beta.q.}(z;\gamma^{\boldsymbol\rho(\beta.,q.)\beta.q.})-\epsilon.
\end{align*}
The lemma is proved.

\end{proof}

\section{Proof of Theorem {\ref{thm1}}}\label{section4}

\begin{proof}[Proof of (\ref{thm1})] 

As discussed in Remark \ref{remark1}, by Lemma \ref{lemma1}, to prove that the upper value function $v^+(\cdot)$ is locally Lipschitz in $D$, it suffices to obtain first derivative estimates in all directions on $v^{(\delta)}$ which are independent of $\delta$. From here to the end of the proof, we abbreviate $v^{(\delta)}$, $(f^{\alpha\beta})^{(\delta)}$, $g^{(\delta)}$ to $v$, $f^{\alpha\beta}$, $g$, respectively. Also, keep in mind that all estimates in the proof should be, and actually are, independent of $\delta$, i.e. the uniform lower bound of the smallest eigenvalue of $a^{\alpha\beta}$.

We first fix the small positive $\kappa$ and pick $x\in D_\kappa$ and $\xi\in\Rd$. Choose a sufficiently small positive $ \epsilon_0$, such that $B(x,\epsilon_0|\xi|):=\{y:|y-x|\le \epsilon_0|\xi|\}\subset D_\kappa$. For any $ \epsilon\in(0, \epsilon_0)$, by Lemma \ref{lemma3},
\begin{align*}
v(x+\epsilon\xi)=&\sup_{\boldsymbol\rho\in\mathcal P} \inf_{(\beta.,q.)\in\mathfrak B\times\mathfrak Q}E_{x+\epsilon\xi}^{\boldsymbol\rho(\beta.,q.)\beta.q.}\bigg[v(z_\gamma(\epsilon))p_\gamma(\epsilon)e^{-\phi_\gamma(\epsilon)}\\
&+\int_0^\gamma\theta_t(\epsilon)f^{\alpha_t\beta_t}(z_t(\epsilon))p_t(\epsilon)e^{-\phi_t(\epsilon)}dt\bigg],
\end{align*}
where $z_t^{\alpha.\beta.q.,x+\epsilon\xi}$ is defined by (\ref{itoz}). We claim that for all $\gamma^{\alpha.\beta.,x+\epsilon\xi}\le \bar\tau^{\alpha.\beta.,x+\epsilon\xi}_\kappa(\epsilon)\wedge \hat\tau^{\alpha.\beta.q.,x+\epsilon\xi}_\kappa(\epsilon),$
\begin{align}
\label{bds}v(x+\epsilon\xi)\le&\sfm\ifb E_{x+\epsilon\xi}^{\boldsymbol\mu(\beta.)\beta.}\bigg[v(y_\gamma(\epsilon))p_\gamma(\epsilon)e^{-\phi_\gamma(\epsilon)}\\
&+\int_0^\gamma\theta_t(\epsilon)f^{\alpha_t\beta_t}(y_t(\epsilon))p_t(\epsilon)e^{-\phi_t(\epsilon)}dt\bigg].\nonumber
\end{align}
Indeed, notice that for all $\alpha.\in\mathfrak A$ and $\beta.\in\mathfrak B$, the process $Q_s^{\alpha.\beta.}(\epsilon)$ defined by (\ref{Qs}) is an element of $\mathfrak Q$. Recall that $\mathfrak Q$ is defined in the statement of Lemma \ref{lemma3}. It turns out that 
\begin{align*}
\MoveEqLeft\{y_{t\wedge \gamma^{\alpha.\beta.,x+\epsilon\xi}}^{\alpha.\beta.,x+\epsilon\xi}(\epsilon):\alpha.\in\mathfrak A, \beta.\in\mathfrak B\}\\
&\subset\{z_{t\wedge\gamma^{\alpha.\beta.,x+\epsilon\xi}}^{\alpha.\beta.q.,x+\epsilon\xi}(\epsilon):\alpha.\in\mathfrak A, \beta.\in\mathfrak B,q.\in\mathfrak Q\},
\end{align*}
which implies that for each $\alpha.\in\mathfrak A$,
\begin{align*}
\MoveEqLeft \inf_{(\beta.,q.)\in\mathfrak B\times\mathfrak Q}E_{x+\epsilon\xi}^{\alpha.\beta.q.}\bigg[v(z_\gamma(\epsilon))p_\gamma(\epsilon)e^{-\phi_\gamma(\epsilon)}+\int_0^\gamma\theta_t(\epsilon)f^{\alpha_t\beta_t}(z_t(\epsilon))p_t(\epsilon)e^{-\phi_t(\epsilon)}dt\bigg]\\
&\le\ifb E_{x+\epsilon\xi}^{\alpha.\beta.}\bigg[v(y_\gamma(\epsilon))p_\gamma(\epsilon)e^{-\phi_\gamma(\epsilon)}+\int_0^\gamma\theta_t(\epsilon)f^{\alpha_t\beta_t}(y_t(\epsilon))p_t(\epsilon)e^{-\phi_t(\epsilon)}dt\bigg].
\end{align*}
Then we notice that each strategy in $\mathcal P$ is also a strategy in $\mathcal M$, which implies that from the above inequality, we have
$$\sup_{\boldsymbol\rho\in\mathcal P}(LHS)\le\sup_{\boldsymbol\rho\in\mathcal P}(RHS)\le \sup_{\boldsymbol \mu\in\mathcal M}(RHS).$$
Therefore (\ref{bds}) is true. 

To make the expression shorter, for any $\bar x=(x,x^{d+1},x^{d+2},x^{d+3})\in \bar D\times[0,\infty)\times[0,\infty)\times\mathbb R$, we introduce
\begin{equation}\label{V}
V(\bar x)=v(x)\exp(-x^{d+1})x^{d+2}+x^{d+3}.
\end{equation}
We also define
\begin{align*}
\bar y_t^{\alpha.\beta.,y}(\epsilon)=&\Big(y_t^{\alpha.\beta.,y}(\epsilon), \phi_t^{\alpha.\beta.}(\epsilon),p_t^{\alpha.\beta.}(\epsilon),F_t^{\alpha.\beta.,y}(\epsilon)\Big),\\
\bar x_t^{\alpha.\beta.,x}=&\bar y_t^{\alpha.\beta.,x}(0),
\end{align*}
where $\phi_t^{\alpha.\beta.}(\epsilon)$ and $p_t^{\alpha.\beta.}(\epsilon)$ are given by (\ref{phit}) and (\ref{pt}), and
\begin{equation}\label{Ft}
F_t^{\alpha.\beta.,y}(\epsilon)=\int_0^{t}\theta_s^{\alpha.\beta.}(\epsilon)f^{\alpha_s\beta_s}(y_s^{\alpha.\beta.,y}( \epsilon))p^{\alpha.\beta.}_s( \epsilon)e^{-\phi_s^{\alpha.\beta.}( \epsilon)}ds.
\end{equation}
Then for the stopping times 
$$\gamma^{\alpha.\beta.}:=\bar\tau_\kappa^{\alpha.\beta.,x+\epsilon\xi}(\epsilon)\wedge\tau_\kappa^{\alpha.\beta,x}\wedge T\wedge\vartheta^{\alpha.\beta.,\xi}_n,$$
where $T\in [1,\infty)$ is constant and 
$$\vartheta_n^{\alpha.\beta.,\xi}=\tau_\kappa^{\alpha.\beta.,x}\wedge\inf\{t\ge0:|\xi^{\alpha.\beta.,\xi}_t|\ge n\},$$
from (\ref{bds}) and (\ref{dpp}), we have
\begin{align*}
v(x+ \epsilon\xi)\le&\sfm\ifb E^{\boldsymbol\mu(\beta.)\beta.}_{x+\epsilon\xi}V(\bar y_\gamma(\epsilon)),\\
v(x)=&\sfm\ifb E^{\boldsymbol\mu(\beta.)\beta.}_xV(\bar x_\gamma),
\end{align*}
respectively.

By noticing that
\begin{align*}
\MoveEqLeft\sfm\ifb E^{\boldsymbol\mu(\beta.)\beta.}_{x+\epsilon\xi}V(\bar y_\gamma(\epsilon))-\sfm\ifb E^{\boldsymbol\mu(\beta.)\beta.}_xV(\bar x_\gamma) \\
&\le\sfm\Big(\ifb E^{\boldsymbol\mu(\beta.)\beta.}_{x+\epsilon\xi}V(\bar y_\gamma(\epsilon))-\ifb E^{\boldsymbol\mu(\beta.)\beta.}_xV(\bar x_\gamma)\Big) \\
&\le\sfm\sfb\Big(E^{\boldsymbol\mu(\beta.)\beta.}_xV(\bar x_\gamma)-E^{\boldsymbol\mu(\beta.)\beta.}_{x+\epsilon\xi}V(\bar y_\gamma(\epsilon))\Big) \\
&\le\sfa\sfb\Big(E^{\alpha.\beta.}_xV(\bar x_\gamma)-E^{\alpha.\beta.}_{x+\epsilon\xi}V(\bar y_\gamma(\epsilon))\Big),
\end{align*}
we have
\begin{align}
\frac{v(x+ \epsilon\xi)-v(x)}{ \epsilon}\le&\sfa\sfb E\bigg|\frac{V(\bar y^{\alpha.\beta.,x+\epsilon\xi}_{\gamma^{\alpha.\beta.}}(\epsilon))-V(\bar x^{\alpha.\beta.,x}_{\gamma^{\alpha.\beta.}})}{ \epsilon}\bigg|\nonumber\\
\le &I_1(\epsilon,T,n)+I_2(\epsilon,T,n).\label{i1i2}
\end{align}
Here
\begin{align*}
I_1(\epsilon, T,n)=&\sfa\sfb E\bigg|\frac{V(\bar y^{\alpha.\beta.,x+\epsilon\xi}_{\gamma^{\alpha.\beta.}}(\epsilon))-V(\bar x^{\alpha.\beta.,x}_{\gamma^{\alpha.\beta.}})}{ \epsilon}-V_{(\bar\xi_{\gamma^{\alpha.\beta.}}^{\alpha.\beta.,\xi})}(\bar x_{\gamma^{\alpha.\beta.}}^{\alpha.\beta.,x})\bigg|,\\
I_2(\epsilon, T,n)=&\sfa\sfb E|V_{(\bar\xi_{\gamma^{\alpha.\beta.}}^{\alpha.\beta.,\xi})}(\bar x_{\gamma^{\alpha.\beta.}}^{\alpha.\beta.,x})|,
\end{align*}
where
\begin{equation}\label{xibar}
\bar\xi_t^{\alpha.\beta.,\xi}=(\xi_t^{\alpha.\beta.,\xi},\xi_t^{d+1,\alpha.\beta.},\xi_t^{d+2,\alpha.\beta.},\xi_t^{d+3,\alpha.\beta.}),
\end{equation}
with $\xi_t^{\alpha.\beta.,\xi}$ the solution to the stochastic differential equation (\ref{itoxi}) and
\begin{align}
\xi_t^{d+1,\alpha.\beta.}=&\int_0^t2r^{\alpha.\beta.}_sc^{\alpha_s\beta_s}ds,\\
\xi_t^{d+2,\alpha.\beta.}=&\int_0^t\pi^{\alpha.\beta.}_sdw_s\Big(=\zeta^{\alpha.\beta.,0}_t\Big),\\
\xi_t^{d+3,\alpha.\beta.}=&\int_0^te^{-\phi_s^{\alpha.\beta.}}\Big[f^{\alpha_s\beta_s}_{(\xi_s^{\alpha.\beta.,\xi})}(x_s^{\alpha.\beta.,x})\label{42}\\
&+\big(2r^{\alpha.\beta.}_s-\xi_s^{d+1,\alpha.\beta.}+\xi_s^{d+2,\alpha.\beta.}\big)f^{\alpha_s\beta_s}(x_s^{\alpha.\beta.,x})\Big]ds.\nonumber
\end{align}

We claim that 
\begin{equation}\label{i1}
\lim_{\epsilon\downarrow0}I_1(\epsilon,T,n)=0.
\end{equation}
Since $V(\bar x)$ is globally $C^{1,\gamma}$-continuous, to apply (c4) of Lemma \ref{lemma2}, it suffices to prove that
\begin{equation}
\lim_{\epsilon\downarrow0}\bigg(\sfa\sfb E\sup_{t\le\gamma}\Big|\frac{\bar y_t^{\alpha.\beta.,x+\epsilon\xi}(\epsilon)-\bar x_t^{\alpha.\beta.,x}}{\epsilon}-\bar\xi_t^{\alpha.\beta.,\xi}\Big|\bigg)=0. \label{barlimy}
\end{equation}
In other words, we just need to show
\begin{align}
&\lim_{\epsilon\downarrow0}\bigg(\sfa\sfb E\sup_{t\le\gamma}\Big|\frac{ y_t^{\alpha.\beta.,x+\epsilon\xi}(\epsilon)- x_t^{\alpha.\beta.,x}}{\epsilon}-\xi_t^{\alpha.\beta.,\xi}\Big|\bigg)=0,\label{lim1}\\
&\lim_{\epsilon\downarrow0}\bigg(\sfa\sfb E\sup_{t\le\gamma}\Big|\frac{ \phi_t^{\alpha.\beta.}(\epsilon)- \phi_t^{\alpha.\beta.}}{\epsilon}-\xi_t^{d+1,\alpha.\beta.}\Big|\bigg)=0,\label{lim2}\\
&\lim_{\epsilon\downarrow0}\bigg(\sfa\sfb E\sup_{t\le\gamma}\Big|\frac{ p_t^{\alpha.\beta.}(\epsilon)- 1}{\epsilon}-\xi_t^{d+2,\alpha.\beta.}\Big|\bigg)=0,\label{lim3}\\
&\lim_{\epsilon\downarrow0}\bigg(\sfa\sfb E\sup_{t\le\gamma}\Big|\frac{ F_t^{\alpha.\beta.,x+\epsilon\xi}(\epsilon)- F_t^{\alpha.\beta.,x}}{\epsilon}-\xi_t^{d+3,\alpha.\beta.}\Big|\bigg)=0\label{lim4}.
\end{align}

Equation (\ref{lim1}) is true via  (c2) in Lemma \ref{lemma2}. 

Equation (\ref{lim2}) is true because
$$\bigg|\frac{ \phi_t^{\alpha.\beta.,x+\epsilon\xi}(\epsilon)- \phi_t^{\alpha.\beta.,x}}{\epsilon}-\xi_t^{d+1,\alpha.\beta.}\bigg|\le\epsilon T|c^{\alpha\beta}|_{0,A\times B}4\pi (r_t^{\alpha.\beta.})^2$$
via Taylor's theorem.

To prove Equation (\ref{lim3}), we notice that
$$\frac{p_t^{\alpha.\beta.}(\epsilon)-1}{\epsilon}-\xi_t^{d+1,\alpha.\beta.}=\int_0^t\big(p^{\alpha.\beta.}_s(\epsilon)-1\big)\pi^{\alpha.\beta.}_sdw_s.$$
Recall that the stopping time $\gamma^{\alpha.\beta.}$ is bounded by $T\wedge\vartheta_n^{\alpha.\beta.,\xi}$. It follows by Davis inequality that
\begin{align*}
\MoveEqLeft E^{\alpha.\beta.}\sup_{t\le\gamma\wedge\gamma_m}\Big|\frac{ p_t(\epsilon)- 1}{\epsilon}-\xi_t^{d+2}\Big|\\
&=E^{\alpha.\beta.}\sup_{t\le\gamma\wedge\gamma_m}\Big|\int_0^t\big(p_s(\epsilon)-1\big)\pi_sdw_s\Big|\\
&\le3E^{\alpha.\beta.}\Big(\int_0^{\gamma\wedge\gamma_m}\big(p_t(\epsilon)-1\big)^2|\pi_t|^2dt\Big)^{1/2}\\
&\le3\epsilon E^{\alpha.\beta.}\bigg[\sup_{t\le\gamma\wedge\gamma_m}\Big|\frac{p_t(\epsilon)-1}{\epsilon}\Big|\Big(\int_0^{\gamma\wedge\gamma_m}|\pi_t|^2dt\Big)^{1/2}\bigg]\\
&\le3\epsilon\sqrt Tn N\kappa^{-1} E^{\alpha.\beta.}\sup_{t\le\gamma\wedge\gamma_m}\Big|\frac{p_t(\epsilon)-1}{\epsilon}\Big|\\
&\le3\epsilon\sqrt T nN\kappa^{-1} E^{\alpha.\beta.}\sup_{t\le\gamma\wedge\gamma_m}\bigg(\Big|\frac{p_t(\epsilon)-1}{\epsilon}-\xi_t^{d+2}\Big|+|\xi_t^{d+2}|\bigg).
\end{align*}
where $\gamma_m$ is a localizing sequence of stopping times such that the left hand side of the inequalities is finite for each $m$. Collecting similar terms to the left side of the inequality and then letting $m\rightarrow\infty$, by the monotone convergence theorem, we obtain
\begin{align*}
\MoveEqLeft\big(1-3\epsilon\sqrt TnN\kappa^{-1}\big)E^{\alpha.\beta.}\sup_{t\le\gamma}\Big|\frac{ p_t(\epsilon)- 1}{\epsilon}-\xi_t^{d+2}\Big|\\
&\le3\epsilon\sqrt T nN\kappa^{-1} E^{\alpha.\beta.}\Big(\int_0^\gamma|\pi_t|^2dt\Big)^{1/2}.
\end{align*}
Then Equation (\ref{lim3}) is obtained by first taking the supremum over $\mathfrak A\times \mathfrak B$ and then letting $\epsilon\downarrow0$. 

To prove Equation (\ref{lim4}), for each $(\alpha,\beta)\in A\times B$, we introduce the  function:
\begin{equation}\label{F}
G^{\alpha\beta}: \bar D\times [0,\infty)\times[0,\infty)\times\mathbb R\rightarrow \mathbb R;\  \bar x\mapsto f^{\alpha\beta}(x)\exp(-x^{d+1})x^{d+2}. 
\end{equation}
From (\ref{Ft}) and (\ref{42}) we have
\begin{align*}
\MoveEqLeft\frac{ F_t^{\alpha.\beta.,x+\epsilon\xi}(\epsilon)- F_t^{\alpha.\beta.,x}}{\epsilon}-\xi_t^{d+3,\alpha.\beta.}\\
&=\int_0^t\bigg[\frac{ G^{\alpha_s\beta_s}(\bar y_s^{\alpha.\beta.,x+\epsilon\xi}(\epsilon))-G^{\alpha_s\beta_s}( \bar x_s^{\alpha.\beta.,x})}{\epsilon}-G^{\alpha_s\beta_s}_{(\bar\xi_s^{\alpha.\beta.,\xi})}(\bar x_s^{\alpha.\beta.,x})\\
&\quad+\frac{\arctan(\pi2\epsilon r_s^{\alpha.\beta.})}{\epsilon\pi}G^{\alpha_s\beta_s}(\bar y_s^{\alpha.\beta.,x+\epsilon\xi}(\epsilon))-2r_s^{\alpha.\beta.} G^{\alpha.\beta.}(\bar x^{\alpha.\beta.,x}_s)\bigg]ds\\
:&=\int_0^t\Big[(H_1)_s^{\alpha.\beta.}+(H_2)_s^{\alpha.\beta.}\Big]ds
\end{align*}
To prove (\ref{lim4}) it suffices to show that
\begin{equation*}\label{lim5}
\lim_{\epsilon\downarrow0}\bigg(\sup_{\alpha\in\mathfrak A}\sfb E\sup_{t\le\gamma}\big|(H_1)_t^{\alpha.\beta.}\big|\bigg)=\lim_{\epsilon\downarrow0}\bigg(\sup_{\alpha\in\mathfrak A}\sfb E\sup_{t\le\gamma}\big|(H_2)_t^{\alpha.\beta.}\big|\bigg)=0,
\end{equation*}
\begin{equation*}\label{lim6}
\end{equation*}
which are valid due to (\ref{lim1}) - (\ref{lim3}), (c3) and (c4) in Lemma \ref{lemma2} and
$$
\Big|\frac{\theta_t^{\alpha.\beta.}(\epsilon)-1}{\epsilon}-2r_t^{\alpha.\beta.}\Big|\le 4\epsilon\pi (r_t^{\alpha.\beta.})^2.
$$
Therefore (\ref{lim4}) is proved, and we obtain (\ref{i1}). 

Next, we estimate $I_2(\epsilon, T,n)$. 

From (\ref{V}) we have
$$V_{(\bar\xi)}(\bar x)=v_{(\xi)}(x)e^{-x^{d+1}}x^{d+2}+v(x)e^{-x^{d+1}}(\xi^{d+2}-x^{d+2}\xi^{d+1})+\xi^{d+3}.$$
As a result,
$$V_{(\bar\xi^{\alpha.\beta.,\xi}_t)}(\bar x^{\alpha.\beta.,x}_t)=e^{-\phi_t^{\alpha.\beta.,x}}v_{(\xi_t^{\alpha.\beta.,\xi})}(x_t^{\alpha.\beta.,x})+X_t^{\alpha.\beta.,x,\xi},$$
where
\begin{align*}
X_t^{\alpha.\beta.,x,\xi}=&e^{-\phi_t^{\alpha.\beta.,x}}(\xi^{d+2,\alpha.\beta.}_t-\xi_t^{d+1,\alpha.\beta.})v(x_t^{\alpha.\beta.,x})+\xi_t^{d+3,\alpha.\beta.}.
\end{align*}
It follows that
\begin{align*}
I_2(\epsilon, T,n)&=\sup_{\alpha.\in\mathfrak A}\sfb E^{\alpha.\beta.}_{x,\xi}|V_{(\bar\xi_\gamma)}(\bar x_\gamma)|\\
& \le \sup_{\alpha.\in\mathfrak A}\sfb E^{\alpha.\beta.}_{x,\xi} |v_{(\xi_{\gamma})}(x_{\gamma})|+\sup_{\alpha.\in\mathfrak A}\sfb E^{\alpha.\beta.}_{x,\xi}|X_{\gamma}|.
\end{align*}

We first claim that
\begin{equation}\label{claimx}
\sfa\sfb E^{\alpha.\beta.}_{x,\xi}|X_{\gamma}|\le N\overline{\mathrm B}^{1/2}(x,\xi),
\end{equation}
where $N$ is independent of $\epsilon$, $T$, $n$.
Indeed, from the definition of $X_t$,
\begin{align*}
|X_\gamma|\le&|v|_{0,D}\big(|\xi_\gamma^{d+1}|+|\xi_\gamma^{d+2}|\big)\\
&+|f|_{1,D}\bigg[\int_0^\gamma e^{-\phi_t}\Big(|\xi_t|+|\xi_t^{d+1}|+|\xi_t^{d+2}|+2|r_t|\Big)dt\bigg].
\end{align*}
Notice that we have the following estimates:
\begin{align*}\allowdisplaybreaks
\MoveEqLeft|v|_{0,D}\le |g|_{0,D}+|\psi|_{0,D}\sup_{(\alpha,\beta)\in A\times B}|f^{\alpha\beta}|_{0,D},\\
\MoveEqLeft\sfa\sfb E^{\alpha.\beta.}|\xi_\gamma^{d+1}|\\
&\le K\sfa\sfb E^{\alpha.\beta.}\int_0^\gamma\Big(|\xi_t|+\frac{|\psi_{(\xi_t)}|}{\psi}\Big)dt\\
&\le N\overline{\mathrm B}^{1/2}(x,\xi),\\
\MoveEqLeft\sfa\sfb E^{\alpha.\beta.}|\xi_\gamma^{d+2}|\\
&\le3\sfa\sfb E^{\alpha.\beta.}\langle\xi^{d+2}\rangle_\gamma^{1/2}\\
&\le K\sfa \sfb E^{\alpha.\beta.}\bigg(\int_0^\gamma\Big(|\xi_t|^2+\pxtsops \Big)dt\bigg)^{1/2}\\
&\le N\overline{\mathrm B}^{1/2}(x,\xi),\\
\MoveEqLeft\sfa\sfb E^{\alpha.\beta.}\int_0^\gamma\big(|\xi_t|+|r_t|\big)dt\\
&\le K\sfa\sfb E^{\alpha.\beta.}\int_0^\gamma\Big(|\xi_t|+\frac{|\psi_{(\xi_t)}|}{\psi}\Big)dt\\
&\le N\overline{\mathrm B}^{1/2}(x,\xi),\\
\MoveEqLeft\sfa\sfb E^{\alpha.\beta.}\int_0^\gamma e^{-\phi_t}|\xi_t^{d+1}|dt\\
&\le\sfa\sfb E^{\alpha.\beta.}\int_0^\gamma2|r_t|dt\int_0^\gamma e^{-ct}cdt\\
&\le K\sfa\sfb E^{\alpha.\beta.}\int_0^\gamma\Big(|\xi_t|+\frac{|\psi_{(\xi_t)}|}{\psi}\Big)dt\\
&\le N\overline{\mathrm B}^{1/2}(x,\xi),\\
\MoveEqLeft\sfa\sfb E^{\alpha.\beta.}\int_0^\gamma |\xi_t^{d+2}|dt\\
&\le\sfa\sfb E^{\alpha.\beta.}\gamma\sup_{t\le\gamma}|\xi_t^{d+2}|\\
&\le\sfa \sfb\big(E^{\alpha.\beta.} \gamma^2\big)^{1/2} \big(E^{\alpha.\beta.} \sup_{t\le\gamma}|\xi_t^{d+2}|^2\big)^{1/2}\\
&\le4|\psi|_{0,D}\sfa\sfb \big(E^{\alpha.\beta.} \langle\xi^{d+2}\rangle_\gamma\big)^{1/2}\\
&\le N\overline{\mathrm B}^{1/2}(x,\xi).
\end{align*}
Applying the estimates above, (\ref{claimx}) is proved.

We also claim that
\begin{align}\label{250}
\MoveEqLeft\varlimsup_{n\uparrow\infty}\varlimsup_{T\uparrow\infty}\varlimsup_{\epsilon\downarrow0}\sfa\sfb E^{\alpha.\beta}_{x,\xi} |v_{(\xi_{\gamma})}(x_{\gamma})|\\
&\le\bigg(\sup_{\substack{y\in\{\psi=\kappa\}\\\eta\in\Rd\setminus\{0\}}}\frac{|v_{(\eta)}(y)|}{\sqrt{\mathrm B_1(y,\eta)}}+2\bigg)\sqrt{2\overline{\mathrm B}(x,\xi)}.\nonumber
\end{align}
Indeed, we notice that
\begin{align*}
\sfa\sfb E^{\alpha.\beta.}_{x,\xi}|v_{(\xi_{\gamma})}(x_{\gamma})|=&\sfa\sfb E^{\alpha.\beta.}_{x,\xi}\frac{|v_{(\xi_{\gamma})}(x_{\gamma})|}{\sqrt{\underline{\mathrm{B}}(x_{\gamma},\xi_{\gamma})}}\cdot\sqrt{\underline{\mathrm{B}}(x_{\gamma},\xi_{\gamma})}\\
\le&J_1(\epsilon, T,n)+J_2(\epsilon, T,n),
\end{align*}
where
\begin{align*}
J_1(\epsilon, T,n)=&\sfa\sfb E^{\alpha.\beta.}_{x,\xi}\bigg(\frac{|v_{(\xi_{\gamma})}(x_{\gamma})|}{\sqrt{\underline{\mathrm{B}}(x_{\gamma},\xi_{\gamma})}}-\frac{|v_{(\xi_{\gamma})}(x_{\tau_\kappa})|}{\sqrt{\underline{\mathrm{B}}(x_{\tau_\kappa},\xi_{\gamma})}}\bigg)\sqrt{\underline{\mathrm{B}}(x_{\gamma},\xi_{\gamma})},\\
J_2(\epsilon, T,n)=&\sfa\sfb E^{\alpha.\beta.}_{x,\xi}\frac{|v_{(\xi_{\gamma})}(x_{\tau_\kappa})|}{\sqrt{\underline{\mathrm{B}}(x_{\tau_\kappa},\xi_{\gamma})}}\sqrt{\underline{\mathrm{B}}(x_{\gamma},\xi_{\gamma})}.
\end{align*}
Note that
$$v_{(\xi)}(x)/\sqrt{\underline{\mathrm{B}}(x,\xi)}=v_{(\xi/|\xi|)}(x)/\sqrt{\underline{\mathrm{B}}(x,\xi/|\xi|)}$$
is a continuous function from $\bar D_\kappa\times S_1$ to $\mathbb{R}$, where $S_1$ is the unit sphere in $\Rd$. By Weierstrass approximation theorem,  there exists a polynomial $W(x,\xi): \bar D_\kappa\times S_1\rightarrow\mathbb{R}$, such that 
$$\sup_{x\in D_\kappa,\xi\in S_1}\Big|\frac{v_{(\xi)}(x)}{\sqrt{\underline{\mathrm{B}}(x,\xi)}}-W(x,\xi)\Big|\le1.$$
It follows that
\begin{align*}
J_1(\epsilon,T,n)\le&\sfa\sfb E^{\alpha.\beta.}_{x,\xi}\big|W(x_{\gamma},\xi_{\gamma}/|\xi_\gamma|)-W(x_{\tau_\kappa},\xi_{\gamma}/|\xi_\gamma|)\big|\sqrt{\underline{\mathrm{B}}(x_{\gamma},\xi_{\gamma})}\\
&+2\sfa\sfb E^{\alpha.\beta.}_{x,\xi}\sqrt{\underline{\mathrm{B}}(x_{\gamma},\xi_{\gamma})}\\
\le&(N\kappa^{-1})\sfa\sfb E^{\alpha.\beta.}_{x,\xi}|x_{\gamma}-x_{\tau_\kappa}||\xi_\gamma|(\mathbbm1_{\tau_\kappa\le\vartheta_n}+\mathbbm1_{\tau_\kappa>\vartheta_n})\\&+2\sqrt{2\overline{\B}(x,\xi)}\\
\le&(Nn\kappa^{-1})\sfa\sfb E^{\alpha.\beta.}_{x}\Big[(\tau_\kappa-\gamma)+\sqrt{\tau_\kappa-\gamma}\Big]\\
&+(N\kappa^{-1})\sfa\sfb E^{\alpha.\beta.}_{x,\xi}|\xi_\gamma|\mathbbm1_{\tau_\kappa>\vartheta_n}+2\sqrt{2\overline{\B}(x,\xi)}.
\end{align*}
Notice that
\begin{align*}
\MoveEqLeft E_x^{\alpha.\beta.}(\tau_\kappa-\gamma)\\
&\le E(\tau_\kappa^{\alpha.\beta.,x}-\tau_\kappa^{\alpha.\beta.,x}\wedge\tau_\kappa^{\alpha.\beta.,x+\epsilon\xi})+E(\tau_\kappa^{\alpha.\beta.,x}-\tau_\kappa^{\alpha.\beta.,x}\wedge T),\\
\MoveEqLeft E^{\alpha.\beta.}_{\xi}|\xi_\gamma|\mathbbm1_{\tau_\kappa>\vartheta_n}\\
&\le\sqrt{E^{\alpha.\beta.}_{x,\xi}|\xi_\gamma|^2}\sqrt{P^{\alpha.\beta.}_{x,\xi}\Big(\sup_{t\le\tau_\kappa}|\xi_t|\ge n\Big)}\\
&\le \frac{1}{n}E^{\alpha.\beta.}_\xi\sup_{t\le\tau_\kappa}|\xi_t|^2.
\end{align*}
Thus by (d1), (d2) and (a1) in Lemma \ref{lemma2},
$$\varlimsup_{n\uparrow\infty}\varlimsup_{T\uparrow\infty}\varlimsup_{\epsilon\downarrow0}J_1(\epsilon,T,n)\le2\sqrt{2\overline{\B}(x,\xi)}.$$
Also, notice that
\begin{align*}
J_2(\epsilon,T,n)\le&\sup_{\substack{y\in\{\psi=\kappa\}\\\eta\in  \Rd\setminus\{0\}}}\frac{|v_{(\eta)}(y)|}{\sqrt{\mathrm{B}_1(y,\eta)}}\cdot\sqrt{2\overline{\B}(x,\xi)}.
\end{align*}
Thus (\ref{250}) is proved.

Combining (\ref{claimx}) and (\ref{250}), we obtain
\begin{equation}\label{i2}
\varlimsup_{n\uparrow\infty}\varlimsup_{T\uparrow\infty}\varlimsup_{\epsilon\downarrow0}I_2(\epsilon,T,n)\le\bigg(\sup_{\substack{y\in \{\psi=\kappa\}\\\eta\in  \Rd\setminus\{0\}}}\frac{|v_{(\eta)}(y)|}{\sqrt{\mathrm{B}_1(y,\eta)}}+N\bigg)\sqrt{\overline{\B}(x,\xi)}.
\end{equation}

It remains to let $\kappa\downarrow0$ and compute
$$
\varlimsup_{\kappa\downarrow0}\bigg(\sup_{\substack{x\in\{\psi=\kappa\}\\\xi\in\Rd\setminus\{0\}}}\vxosbw\bigg).
$$
Due to the compactness of $(\partial D_\kappa)\times S_1$, for each $\kappa$, there exist $x(\kappa)\in\partial D_\kappa$ and $\xi(\kappa)\in S_1$, such that
$$
\sup_{\substack{x\in\{\psi=\kappa\}\\\xi\in\Rd\setminus\{0\}}}\vxosbw=\frac{|v_{(\xi(\kappa))}(x(\kappa))|}{\sqrt{\mathrm{B}_1(x(\kappa),\xi(\kappa))}}.
$$
A subsequence of $(x(\kappa),\xi(\kappa))$ converges to some $(y,\eta)$, where $y\in\partial D$ and $|\eta|=1$.

If $\psi_{(\eta)}(y)\ne0$, then $\mathrm{B}_1(x(\kappa),\xi(\kappa))\nearrow\infty$ as $\kappa\searrow0$. In this case,
$$
\varlimsup_{\kappa\downarrow0}\bigg(\sup_{\substack{x\in\{\psi=\kappa\}\\\xi\in\Rd\setminus\{0\}}}\vxosbw\bigg)=\varlimsup_{\kappa\downarrow0}\frac{|v_{(\xi(\kappa))}(x(\kappa))|}{\sqrt{\mathrm{B}_1(x(\kappa),\xi(\kappa))}}=0.
$$

If $\psi_{(\eta)}(y)=0$, then $\eta$ is tangent to $\partial D$ at $y$. In this case,
\begin{equation*}\label{lims}
\varlimsup_{\kappa\downarrow0}\bigg(\sup_{\substack{x\in\{\psi=\kappa\}\\\xi\in\Rd\setminus\{0\}}}\vxosbw\bigg)=\varlimsup_{\kappa\downarrow0}\frac{|v_{(\xi(\kappa))}(x(\kappa))|}{\sqrt{\mathrm{B}_1(x(\kappa),\xi(\kappa))}}=\frac{|g_{(\eta)}(y)|}{\lambda}.
\end{equation*}

Therefore for all $x\in D$ and $\xi\in\Rd$, we have
\begin{align*}
v_{(\xi)}(x)&\le N\sqrt{\mathbbm1_{x\in D_\kappa^\lambda}B_1(x,\xi)+\mathbbm1_{x\in D_{\lambda^2}}B_2(x,\xi)}\\
&\le N(|\xi|+|\psi_{(\xi)}|\psi^{-1/2}).
\end{align*}

Replacing $\xi$ with $-\xi$ in the inequality above, we obtain the first derivative estimate. 

\end{proof}

\section{Applications to the Dirichlet problem for nonconvex degenerate elliptic Hessian equations}\label{section5} As an example of application, we study the Dirichlet problem for certain nonconvex degenerate elliptic Hessian equations.

Let $d$ be the dimension of the spatial space and $D$ be a smooth bounded, strictly convex domain. For $1\le i\le d$, let $\lambda_k(\gamma)$ be the $i$-th smallest eigenvalue of the matrix $\gamma\in \mathbb S^d$.

\subsection{Example 1}
Consider the Dirichlet problem 
\begin{equation}\label{example1}
\left\{
\begin{array}{rcll}
H({u_{xx}})&=&f &\text{in } D\\
u&=&g &\text{on }\partial D,
\end{array}
\right. 
\end{equation}
with
\begin{equation}\label{2sum}
H(u_{xx})=\sum_{i=1}^{k_1}\lambda_i(u_{xx})+\sum_{i=d-k_2+1}^{d}\lambda_i(u_{xx}),\quad1\le k_1, k_2\le d.
\end{equation}

If $k_1+k_2=d$, then $H$ is the Laplacian.

If $k_1+k_2>d$, then $H$ is nonconvex, uniformly non-degenerate  elliptic, so the well-known $C^{1,\alpha}$-regularity result can be applied to this case.

If $k_1+k_2<d$, then $H$ is nonconvex degenerate  elliptic. We apply Theorem \ref{thm1} to study this case.

Notice that
\begin{align*}
\sum_{i=1}^{k_1}\lambda_i(u_{xx})=&\inf_{\alpha\in P_{k_1}}\big[\tr(\alpha u_{xx})\big],\\
\sum_{i=d-k_2+1}^{d}\lambda_i(u_{xx})=&\sup_{\alpha\in P_{k_2}}\big[\tr(\alpha u_{xx})\big],
\end{align*}
where
\begin{align}\label{proj}
P_k
=&\{\alpha\in \mathbb M^{d\times d}:  \alpha^2=\alpha,\alpha^*=\alpha, \operatorname{rank}(\alpha)=k\}.
\end{align}

Therefore the Hessian equation in the Dirichlet problem (\ref{example1}) can be rewritten as the Isaacs equation
$$\inf_{\beta\in P_{k_1}}\sup_{\alpha\in P_{k_2}}\Big\{\tr\big[(\alpha+\beta)u_{xx}\big]-f\Big\}=0.$$

Since the domain $D$ is bounded smooth and strictly convex, there exists a $C^{\infty}$ global barrier $\psi$ satisfying
\begin{itemize}
\item $D=\{x\in\Rd: \psi>0\}$;
\item $|\psi_x|\ge1$ on $\partial D$;
\item $\tr(a\psi_{xx})\le-1$ in $\bar D, \forall a\in\{\overline{\mathbb S_+^d}:\tr(a)=1\}$.
\end{itemize}
Therefore, we have
$$\sup_{\beta\in P_{k_1}}\sup_{\alpha\in P_{k_2}}\Big\{\tr\big[(\alpha+\beta) \psi_{xx}\big]\Big\}\le-1\mbox{ in }\bar D,\quad\forall 1\le k\le d,$$
which means that Assumption \ref{geod} holds. 

We also observe that for any $q\in \mathbb O^d$, we have $qP_kq^*=P_k, \forall 1\le k\le d$, which implies that Assumption \ref{invc} holds.

Therefore, Theorem 2.1 are applicable to the Dirichlet problem ({\ref{example1}}). Under the settings in Section \ref{section2}, here we particularly let,
\begin{align}
A=P_{k_2},\quad B=P_{k_1},\label{1coe2}
\end{align}
and $\forall \alpha\in A,\beta\in B, x\in D$,
\begin{equation}
a^{\alpha\beta}=\alpha+\beta,\quad \sigma^{\alpha\beta}=\sqrt{2a^{\alpha\beta}},\quad b^{\alpha\beta}=c^{\alpha\beta}=0,\quad f^{\alpha\beta}(x)=f(x).
\label{1coe4}
\end{equation}
We also define $\mathfrak A$, $\mathfrak B$, $\mathcal M$ and $\mathcal N$ accordingly.

\begin{theorem}\label{thm51} Consider the upper value function of the stochastic differential game described by (\ref{1coe2}) and (\ref{1coe4}), i.e.,
\begin{equation*}
v(x)=\sup_{\boldsymbol\mu\in\mathcal M_{}}\inf_{\beta.\in\mathfrak B_{}}E^{\boldsymbol\mu(\beta.)\beta.}_x\bigg[\int_0^{\tau}f(x_t)dt+g(x_\tau)\bigg],
\end{equation*}
with
$$x_t^{\alpha.\beta.,x}=x+\int_0^t\sqrt{2(\alpha_s+\beta_s)}dw_s.$$
For any $f\in C^{0,1}(\bar D)$ and $g\in C^{0,1}(\partial D)$, the value function $v$ is in the class of $C^{0,1}_{loc}(D)\cap C(\bar D)$, and it is the unique viscosity solution to the Dirichlet problem (\ref{example1}). Moreover, for a.e. $x\in D$,
\begin{equation*}
\big|v_{(\xi)}\big|\le N\big(|\xi|+|\psi_{(\xi)}|\psi^{-1/2}\big),\quad\forall\xi\in\Rd,
\end{equation*}
where $\psi$ is a global barrier of the domain $D$, and the constant $N$ depends only on $|f|_{0,1,D}$,$|g|_{0,1,\partial D}$,$|\psi|_{3,D}$ and $d$.
\end{theorem}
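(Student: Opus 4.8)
The plan is to recognise the stochastic differential game specified by (\ref{1coe2})--(\ref{1coe4}) as a special instance of the framework of Section \ref{section2}, apply Theorem \ref{thm1} to obtain the regularity and the gradient bound, identify the associated upper Isaacs problem (\ref{ip}) with the Hessian Dirichlet problem (\ref{example1}), and finally prove uniqueness by a separate comparison argument.

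\emph{Checking the hypotheses.} The control sets $A=P_{k_2}$ and $B=P_{k_1}$ (see (\ref{proj})) are compact metric subsets of $\mathbb M^{d\times d}$; the map $(\alpha,\beta)\mapsto\sigma^{\alpha\beta}=\sqrt{2(\alpha+\beta)}$ is jointly continuous (the principal square root being continuous on $\bSdp$), hence by compactness uniformly continuous and bounded, with $\|\sigma^{\alpha\beta}\|^2=2\tr(\alpha+\beta)=2(k_1+k_2)\le 2d$ and $a^{\alpha\beta}=(1/2)\sigma^{\alpha\beta}(\sigma^{\alpha\beta})^{*}=\alpha+\beta$, while $b^{\alpha\beta}\equiv c^{\alpha\beta}\equiv0$; the running payoff $f^{\alpha\beta}(x)=f(x)$ lies in $C^{0,1}(\bar D)$ (uniformly in $(\alpha,\beta)$, trivially) and $g\in C^{0,1}(\partial D)$. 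Thus, with $K_0:=\max(1,\,|f|_{0,1,D},\,|g|_{0,1,\partial D},\,|\psi|_{3,D},\,\sqrt{2d})$ and $d_1=d$, all the standing requirements of Section \ref{section2} are satisfied. As already noted in the discussion preceding the statement, strict convexity of $D$ furnishes a global barrier $\psi$ with $\tr(a\psi_{xx})\le-1$ for every $a\in\bSdp$ with $\tr a=1$, so $L^{\alpha\beta}\psi=\tr((\alpha+\beta)\psi_{xx})\le-(k_1+k_2)\le-1$ (here $b^{\alpha\beta}\equiv0$), which is Assumption \ref{geod}; and because $\alpha\mapsto q\alpha q^{*}$ maps $P_k$ onto $P_k$ for every $q\in\mathbb O^d$, one has $H^{+}(q^{*}\gamma q,p,z,x)=\inf_{\beta\in P_{k_1}}\sup_{\alpha\in P_{k_2}}[\tr(q(\alpha+\beta)q^{*}\gamma)+f(x)]=H^{+}(\gamma,p,z,x)$, so by the Remark following Assumption \ref{invc}, (\ref{othinv}) holds.

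\emph{Applying Theorem \ref{thm1} and identifying the equation.} Theorem \ref{thm1} now yields $v=v^{+}\in C^{0,1}_{loc}(D)\cap C(\bar D)$, the estimate $|v_{(\xi)}|\le N(|\xi|+|\psi_{(\xi)}|\psi^{-1/2})$ for a.e.\ $x$ with $N=N(K_0,d,d_1)$, which through the choice of $K_0$ and $d_1=d$ depends only on $|f|_{0,1,D}$, $|g|_{0,1,\partial D}$, $|\psi|_{3,D}$ and $d$, and that $v$ is a viscosity solution of (\ref{ip}). By the variational characterisation of partial sums of eigenvalues (Ky Fan), $\sup_{\alpha\in P_{k_2}}\tr(\alpha\gamma)=\sum_{i=d-k_2+1}^{d}\lambda_i(\gamma)$ and $\inf_{\beta\in P_{k_1}}\tr(\beta\gamma)=\sum_{i=1}^{k_1}\lambda_i(\gamma)$; since $f$ is independent of $(\alpha,\beta)$, the Isaacs operator in (\ref{ip}) reduces to the operator in (\ref{example1}) with $H$ as in (\ref{2sum}), so $v$ solves (\ref{example1}) in the viscosity sense with $v=g$ on $\partial D$.

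\emph{Uniqueness.} This is the only genuinely new ingredient, and the step I expect to be the main obstacle, since Theorem \ref{thm1} already supplies existence, $C^{0,1}_{loc}$-regularity and the gradient bound. I would prove the comparison principle: if $\underline u,\bar u\in C(\bar D)$ are respectively a viscosity subsolution and supersolution of (\ref{example1}) with $\underline u\le\bar u$ on $\partial D$, then $\underline u\le\bar u$ in $D$; uniqueness then follows by applying this to two solutions in both orders. The map $\gamma\mapsto H(\gamma)$ is Lipschitz and degenerate elliptic ($X\le Y\Rightarrow H(X)\le H(Y)$), but has no zeroth-order term, so I would use the barrier to pass to strict subsolutions: from $\tr((\alpha+\beta)\psi_{xx})\le-(k_1+k_2)\le-2$ one obtains $H(\gamma-\epsilon\psi_{xx})\ge H(\gamma)+2\epsilon$ for every $\gamma\in\Sd$, hence $\underline u_\epsilon:=\underline u-\epsilon\psi$ is a viscosity subsolution of $H(u_{xx})=f+2\epsilon$ with $\underline u_\epsilon=\underline u$ on $\partial D$ and $\underline u_\epsilon\le\underline u$ in $D$. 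If $\max_{\bar D}(\underline u_\epsilon-\bar u)>0$, it is attained at an interior point, and the usual doubling-of-variables argument together with the Crandall--Ishii lemma produces symmetric matrices $X\le Y$ and points $x_\delta,y_\delta$ with $H(X)\ge f(x_\delta)+2\epsilon$ and $H(Y)\le f(y_\delta)$; since $H(X)\le H(Y)$, this forces $2\epsilon\le f(y_\delta)-f(x_\delta)\le\omega_f(|x_\delta-y_\delta|)$, which tends to $0$ as $\delta\downarrow0$, a contradiction. Therefore $\underline u_\epsilon\le\bar u$ for all $\epsilon>0$, and $\epsilon\downarrow0$ gives $\underline u\le\bar u$. (Alternatively, one may appeal to the comparison/uniqueness theory of Harvey--Lawson \cite{MR2487853}.) The delicate points here are the sign bookkeeping in the inequality $H(\gamma-\epsilon\psi_{xx})\ge H(\gamma)+2\epsilon$ and the systematic use of the strict convexity of $D$, which makes $\psi$ concave and thereby manufactures the strict subsolution that the absence of a zeroth-order term in $H$ would otherwise preclude.
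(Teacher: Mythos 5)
Your proposal follows the paper's own route exactly: verify Assumptions \ref{geod} and \ref{invc} using the strictly convex barrier and the orthogonal invariance $qP_kq^*=P_k$, apply Theorem \ref{thm1}, and identify the Isaacs operator with $H$ via the Ky Fan inf/sup characterisation of partial sums of eigenvalues. The only addition is your explicit comparison-principle proof of uniqueness, which is correct (the strict subsolution $\underline u-\epsilon\psi$ combined with the Crandall--Ishii lemma works precisely because $H$ has no first- or zeroth-order terms and no $x$-dependence in the second-order part) and fills in a step the paper merely asserts, implicitly deferring to the Harvey--Lawson uniqueness theory.
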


\begin{remark}
If in (\ref{2sum}), one of the summation disappears, and $k_1,k_2<d$, then $H$ is convex/concave degenerate elliptic. Applying Theorem 2.3 in \cite{InteriorRegularityI}, we know that for $f\in C^{1,1}(\bar D)$, $g\in C^{1,1}(\partial D)$, the Dirichlet problem (\ref{example1}) is unique solvable in the class of $C^{1,1}_{loc}(D)\cap C(\bar D)$.
\end{remark}

\subsection{Example 2} 
Consider the Dirichlet problem 
\begin{equation}\label{example2}
\left\{
\begin{array}{rcll}
H({u_{xx}})&=&f &\text{in } D\\
u&=&g &\text{on }\partial D,
\end{array}
\right. 
\end{equation}
with
\begin{equation}\label{2sum}
H(u_{xx})=\sum_{i=k+1}^{k+j}\lambda_i(u_{xx}),\quad0< k<k+j < d.
\end{equation}

Here, $H$ is still nonconvex, degenerate elliptic.

Observe that
$$\sum_{i=k+1}^{k+j}\lambda_i(u_{xx})=\inf_{\beta\in P_{k+j}}\sup_{\alpha\in P_j}\big[\tr(\beta\alpha\beta^*u_{xx})\big],$$
where $P_{k+j}$ and $P_{j}$ are defined by (\ref{proj}).  

Notice that if $k+2j>d$, then $\operatorname{rank}(\beta\alpha)\ge1$. Consequently, Assumption \ref{geod} are satisfied with the same global barrier in the previous example. Also, for all $q\in \mathbb O^d$, $qP_kq^*=P_k$, so Assumption \ref{invc} holds with the satisfaction of Condition (\ref{othinv}).

Therefore, Theorem 2.1 are applicable to the Dirichlet problem ({\ref{example2}}) when $k+2j>d$. Under the settings in Section \ref{section2}, here we let, in particular,
\begin{align}
A=P_{j},\quad B=P_{k+j},\label{1coe6}
\end{align}
and $\forall \alpha\in A,\beta\in B, x\in D$,
\begin{equation}
a^{\alpha\beta}=\beta\alpha\beta^*,\quad \sigma^{\alpha\beta}=\sqrt{2a^{\alpha\beta}},\quad b^{\alpha\beta}=c^{\alpha\beta}=0,\quad f^{\alpha\beta}(x)=f(x).
\label{1coe7}
\end{equation}
We also define $\mathfrak A$, $\mathfrak B$, $\mathcal M$ and $\mathcal N$ accordingly.

\begin{theorem}\label{thm52} Assume $k+2j>d$. Consider the upper value function of the stochastic differential game described by (\ref{1coe6}) and (\ref{1coe7}), i.e.,
\begin{equation*}
v(x)=\sup_{\boldsymbol\mu\in\mathcal M_{}}\inf_{\beta.\in\mathfrak B_{}}E^{\boldsymbol\mu(\beta.)\beta.}_x\bigg[\int_0^{\tau}f(x_t)dt+g(x_\tau)\bigg],
\end{equation*}
with
$$x_t^{\alpha.\beta.,x}=x+\int_0^t\sqrt{2\beta\alpha\beta^*}dw_s.$$
For any $f\in C^{0,1}(\bar D)$ and $g\in C^{0,1}(\partial D)$, the value function $v$ is in the class of $C^{0,1}_{loc}(D)\cap C(\bar D)$, and it is the unique viscosity solution to the Dirichlet problem (\ref{example2}). Moreover, for a.e. $x\in D$,
\begin{equation*}
\big|v_{(\xi)}\big|\le N\big(|\xi|+|\psi_{(\xi)}|\psi^{-1/2}\big),\quad\forall\xi\in\Rd,
\end{equation*}
where $\psi$ is a global barrier of the domain $D$, and the constant $N$ depends on on $|f|_{0,1,D}$,$|g|_{0,1,\partial D}$,$|\psi|_{3,D}$ and $d$.
\end{theorem}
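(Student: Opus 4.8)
The plan is to deduce all three assertions from Theorem~\ref{thm1}, after recasting the Dirichlet problem (\ref{example2}) as the upper Isaacs problem (\ref{ip}) for the game prescribed by (\ref{1coe6})--(\ref{1coe7}), and then to supply separately the uniqueness of the viscosity solution, which is not part of Theorem~\ref{thm1}. By the representation $\sum_{i=k+1}^{k+j}\lambda_i(\gamma)=\inf_{\beta\in P_{k+j}}\sup_{\alpha\in P_j}\tr(\beta\alpha\beta^{*}\gamma)$ recorded above, together with the fact that $f$ is independent of $(\alpha,\beta)$, the equation in (\ref{example2}) is precisely $H^{+}(u_{xx},u_x,u,x)=0$ with $A=P_j$, $B=P_{k+j}$, $a^{\alpha\beta}=\beta\alpha\beta^{*}$, $\sigma^{\alpha\beta}=\sqrt{2\beta\alpha\beta^{*}}$, $b^{\alpha\beta}=c^{\alpha\beta}=0$. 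First I would verify the structural hypotheses of Section~\ref{section2}: $A$ and $B$ are compact metric spaces, $(\alpha,\beta)\mapsto\sigma^{\alpha\beta}$ is continuous into $\mathbb M^{d\times d}$ (so here $d_1=d$), $c^{\alpha\beta}\equiv0$ is non-negative, and $f^{\alpha\beta}(\cdot)=f(\cdot)$ is globally Lipschitz in $\bar D$; since $\|\sigma^{\alpha\beta}\|^{2}=2\tr(\alpha\beta)\le 2d$, $b,c\equiv0$, $|f^{\alpha\beta}|_{0,1,D}=|f|_{0,1,D}$ and the barrier $\psi$ chosen below is $C^{\infty}$, all the relevant bounds are absorbed into one constant $K_0$ depending only on $d$, $|f|_{0,1,D}$, $|g|_{0,1,\partial D}$ and $|\psi|_{3,D}$.

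Next I would check Assumptions~\ref{geod} and~\ref{invc}; this is where strict convexity of $D$ and the inequality $k+2j>d$ come in, as indicated in the discussion preceding the theorem. For Assumption~\ref{geod}, let $\psi$ be the $C^{\infty}$ global barrier of the bounded, strictly convex $D$ from Example~1, satisfying $\tr(a\psi_{xx})\le-1$ for all $a\in\overline{\mathbb S^{d}_{+}}$ with $\tr(a)=1$. Since $b^{\alpha\beta}=0$, Assumption~\ref{geod} reduces to $\tr(\beta\alpha\beta^{*}\psi_{xx})\le-1$; and $k+2j>d$ forces the $j$-dimensional range of $\alpha$ and the $(k+j)$-dimensional range of $\beta$ to meet in a subspace of dimension at least $k+2j-d\ge1$, whence $\tr(\beta\alpha\beta^{*})=\tr(\alpha\beta)\ge k+2j-d\ge1$; normalizing $a^{\alpha\beta}$ by its (positive) trace and invoking the barrier inequality gives $\tr(a^{\alpha\beta}\psi_{xx})\le-\tr(a^{\alpha\beta})\le-1$. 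For Assumption~\ref{invc}, note that $H^{+}$ is a function of the eigenvalues of $\gamma$ only, so $H^{+}(q^{*}\gamma q,p,z,x)=H^{+}(\gamma,p,z,x)$ for every $q\in\mathbb O^{d}$, and Condition~(\ref{othinv}) holds by the Remark following Assumption~\ref{invc} (equivalently, $qP_{m}q^{*}=P_{m}$ for all $q\in\mathbb O^{d}$, $1\le m\le d$). Theorem~\ref{thm1} then applies and yields directly that $v=v^{+}\in C^{0,1}(D)\cap C(\bar D)$, that $v$ is a viscosity solution of (\ref{ip}), i.e.\ of (\ref{example2}), and that for a.e.\ $x\in D$ one has $|v_{(\xi)}|\le N\big(|\xi|+|\psi_{(\xi)}|\psi^{-1/2}\big)$ with $N=N(K_0,d,d_1)$, hence depending only on $d$, $|f|_{0,1,D}$, $|g|_{0,1,\partial D}$ and $|\psi|_{3,D}$.

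It remains to argue that $v$ is the \emph{unique} viscosity solution of (\ref{example2}). Here I would invoke the comparison principle for this degenerate Hessian branch: the operator $\gamma\mapsto\sum_{i=k+1}^{k+j}\lambda_i(\gamma)$ is degenerate elliptic, positively homogeneous of degree one and a function of the eigenvalues of $\gamma$ alone, so (\ref{example2}) sits in a proper elliptic branch in the sense of Harvey--Lawson, and the strict convexity of $D$ supplies exactly the boundary geometry under which the Dirichlet problem has a unique solution in $C(\bar D)$ (cf.\ \cite{MR2487853}; the inhomogeneous datum $f$ is handled in the same way). Combining this comparison principle with the existence and regularity furnished by Theorem~\ref{thm1} identifies $v$ as the unique viscosity solution and completes the proof.

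The only genuine obstacle is the uniqueness step: Theorem~\ref{thm1} produces \emph{a} solution with the optimal regularity, but since the equation is truly degenerate and has no zeroth-order term ($c^{\alpha\beta}\equiv0$), no off-the-shelf comparison theorem for viscosity solutions applies, so one must rely on the special eigenvalue structure of $H$; everything else in the proof is routine verification that the hypotheses of Section~\ref{section2} and of Theorem~\ref{thm1} are met.
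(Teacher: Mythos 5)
Your proposal is correct and follows essentially the same route as the paper: rewrite $\sum_{i=k+1}^{k+j}\lambda_i$ as $\inf_{\beta\in P_{k+j}}\sup_{\alpha\in P_j}\tr(\beta\alpha\beta^*\,\cdot\,)$, verify Assumption \ref{geod} via the strictly convex barrier together with $k+2j>d$ (which the paper phrases as $\operatorname{rank}(\beta\alpha)\ge1$ and you justify by the trace-normalization argument), verify Assumption \ref{invc} by orthogonal invariance of $P_m$, and then apply Theorem \ref{thm1}, with uniqueness resting on the Harvey--Lawson result cited in the introduction exactly as the paper implicitly does. Your filled-in details (the intersection-of-ranges bound $\tr(\alpha\beta)\ge k+2j-d$ and the handling of the inhomogeneous datum in the uniqueness step) only make explicit what the paper leaves terse.
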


\providecommand{\bysame}{\leavevmode\hbox to3em{\hrulefill}\thinspace}
\providecommand{\MR}{\relax\ifhmode\unskip\space\fi MR }
\providecommand{\MRhref}[2]{%
  \href{http://www.ams.org/mathscinet-getitem?mr=#1}{#2}
}
\providecommand{\href}[2]{#2}

\end{document}